\documentclass[12pt,reqno]{amsart}

\usepackage[T2A]{fontenc}
\usepackage[utf8]{inputenc}
\usepackage[english]{babel}
\usepackage{amsmath,amssymb,amsthm,mathtools}
\usepackage{mathrsfs}
\usepackage{cite}
\usepackage{enumitem}

\newtheorem{theorem}{Theorem}[section]
\newtheorem{lemma}{Lemma}[section]
\newtheorem{corollary}{corollary}[section]
\newtheorem{example}{Example}[section]

\begin{document}

\title[Polynomial invariants]{\MakeUppercase{Polynomial joint first-order differential projective invariants}}

\author{Leonid Bedratyuk}
\address{Khmelnytskyi National University, Insituts'ka  st., 11,  Khmelnytskyi, 29016, Ukraine}
\email{leonidbedratyuk@khmnu.edu.ua}
\maketitle

\begin{abstract}

We study polynomial absolute and relative joint first-order
differential projective invariants for configurations of
$n$ points in the plane. The main approach is based on
passing to a homogeneous vector--covector representation,
which reduces the problem to the classical invariant theory
of the group $SL(3,\mathbb C)$.
The algebra of polynomial absolute invariants is described
and shown to be generated by cyclic invariants. For
polynomial relative invariants of weight $-1$, a graded
description is obtained, their finite generation as a
module over the algebra of absolute invariants is proved,
and an explicit finite generating system is constructed.
For $n=3$, the corresponding module is shown to be free of
rank $1$. For $n=4$, a minimal homogeneous generating
system consisting of $39$ elements is constructed.

The obtained results complement the rational theory of
joint projective differential invariants by its polynomial
counterpart and provide an algebraic foundation for the
further construction of projectively invariant integral
characteristics.

\end{abstract}

\textbf{Keywords:}
projective invariants; relative invariants;
differential invariants;
representations of $SL(3,\mathbb C)$;
projective group.


\section{Introduction}

Invariant theory for group actions naturally splits into two
interrelated but essentially different problems: the description
of rational invariants and the description of polynomial
invariants. In the first case, the main object is the field of
invariant rational functions, whereas in the second case it is a
graded algebra of polynomial invariants together with the associated
modules of relative invariants. Even when the field of rational
invariants is completely described, the polynomial problem usually
remains independent: it requires finding generators, relations among
them, investigating the graded structure, and constructing minimal
generating systems for the corresponding modules.

In this paper, which continues two previous works of the
author~\cite{B-2025-1,B-2025}, we consider this problem for the
diagonal action of the projective group $PGL(3,\mathbb R)$ on
configurations of first-order data.

Let $u=u(x,y)$ be a smooth scalar function on the plane.
At each point $(x_i,y_i)$, consider the first-order data
$$
(x_i,y_i,p_i,q_i),
\qquad
p_i=\partial_xu(x_i,y_i),
\qquad
q_i=\partial_yu(x_i,y_i).
$$
A configuration of $n$ such points defines an element of the space
$
\bigl(\mathbb R^2\times\mathbb R^2\bigr)^n.
$

Let
$$
T=
\begin{pmatrix}
a_1&a_2&a_3\\
b_1&b_2&b_3\\
c_1&c_2&c_3
\end{pmatrix}\in GL(3,\mathbb R),
\qquad
g=[T]\in PGL(3,\mathbb R).
$$
In an affine chart, the projective transformation determined by
$g$ has the form
$$
(x_i,y_i)\longmapsto
\left(
\frac{a_1x_i+a_2y_i+a_3}{s_i},
\frac{b_1x_i+b_2y_i+b_3}{s_i}
\right),
\qquad
s_i=c_1x_i+c_2y_i+c_3.
$$
The Jacobian of this transformation at the point $(x_i,y_i)$ is
$$
J_i(g)=\frac{\det T}{s_i^3}.
$$

The projective action on the plane naturally prolongs to first-order
derivatives and therefore defines a diagonal action of
$PGL(3,\mathbb R)$ on $\mathcal M_n$. The corresponding Jacobian
multiplier of the diagonal action on the coordinates of $n$ points
is
$$
J(g)
=
\prod_{i=1}^nJ_i(g)
=
\frac{(\det T)^n}{\prod_{i=1}^ns_i^3}.
$$

Denote
$$
R_n=
\mathbb R[
x_i,y_i,p_i,q_i\mid i=1,\ldots,n],
\qquad
K_n=\operatorname{Frac}(R_n).
$$
A rational function $F\in K_n$ is called a
\emph{relative first-order differential projective invariant of
weight $\omega$} if
$$
F(g\cdot z)=J(g)^\omega F(z),
$$
for all admissible projective transformations
$g\in PGL(3,\mathbb R)$. For $\omega=0$, one obtains
\emph{absolute} invariants. If, in addition,
$F\in R_n$, then the corresponding invariant is called
\emph{polynomial}.

The fields of rational absolute and relative invariants of this
action were largely described in the aforementioned works of the
author. In particular, explicit generating systems for the field of
absolute invariants were found, and the structure of rational
relative invariants was described. However, passing from rational
to polynomial invariants fundamentally changes the algebraic nature
of the problem.

Denote by $\mathcal P_{n,\omega}$ the vector space over
$\mathbb R$ of polynomial relative invariants of weight $\omega$.
The absolute polynomial invariants form the graded algebra
$
\mathcal A_n=\mathcal P_{n,0},
$
whereas the polynomial relative invariants of a fixed weight form
a module over $\mathcal A_n$. This naturally leads to questions
concerning finite generation of this module, its graded structure,
explicit generators, and minimality of generating systems. These
questions do not follow automatically from the description of the
corresponding field of rational invariants.

Of particular interest is the module
$
\mathcal P_{n,-1},
$
whose study is the main objective of this paper.
In addition to its intrinsic algebraic interest, the choice of
weight $-1$ has a natural applied motivation. If $F$ is a relative
invariant of weight $-1$, then under a projective change of variables
its factor $J(g)^{-1}$ compensates for the Jacobian multiplier of the
diagonal action. Therefore, integrals of the form
$$
\int_{\Omega^n}
F(z_1,\ldots,z_n)
\prod_{i=1}^n dx_i\,dy_i, \quad \Omega \subset \mathbb{R}^2,
$$
are natural candidates for projectively invariant integral
characteristics.

It is precisely here that the distinction between rational and
polynomial relative invariants becomes essential. A rational
invariant generally contains a denominator that may vanish on
certain configurations. This produces singularities of the
integrand, complicates the convergence of the corresponding
integrals, and may cause numerical instability near the zero set
of the denominator. Polynomial relative invariants do not possess
such denominator-related singularities and are therefore much more
natural candidates for constructing projectively invariant integral
characteristics.

This motivation is directly related, in particular, to problems in
computer vision and pattern recognition. The classical theory of
moment invariants, initiated by Hu~\cite{Hu}, provides effective
invariant characteristics for Euclidean, affine, and several other
transformation groups; a modern survey of the theory of moments and
moment invariants can be found in~\cite{FSB}. For the full projective
group, the situation is considerably more complicated. In particular,
standard geometric moments do not form a finite system closed under
the projective action~\cite{Van}; various alternative approaches to
the construction of projective image characteristics were studied
in~\cite{Suk2000,Wang,Li2019}. Polynomial relative differential
invariants of weight $-1$ provide natural algebraic material for the
further construction of such integral characteristics.

Individual polynomial relative projective invariants of weight $-1$
were also obtained earlier in works motivated by pattern recognition
problems. In particular, explicit joint first-order differential
projective invariants for small configurations were constructed
in~\cite{Wang}; further development of this approach and its
application to the construction of integral projective image
invariants can be found in~\cite{Li2019}. These results showed that
polynomial relative invariants of weight $-1$ already exist for
configurations with a small number of points. However, to the best
of our knowledge, the general structure of the space of all such
polynomial invariants for arbitrary $n$, its module structure, and
the problem of minimal generators have not been studied
systematically.

In a broader context, the problem of constructing projective
differential invariants has a long history dating back to classical
works of the late nineteenth century
\cite{Halphen1878,Bouton1898}. The modern systematic approach to
differential invariants of Lie group actions is based, in particular,
on the method of moving frames
\cite{Olver1999-1,Olver1999-2,Olver2001,Olver2007,
Olver2011}. Classical theory mainly concerns differential invariants
of geometric submanifolds and their jets, whereas in the present
paper we are interested in joint invariants of finite point
configurations using only first-order data.

The main idea of the paper is to reduce the problem of polynomial
projective invariants to the classical invariant theory of the group
$SL(3,\mathbb C)$. To this end, affine point coordinates and
first-order data are replaced by a homogeneous vector--covector
representation. In this model the projective action becomes linear,
while independent rescaling of homogeneous representatives is
described by an additional torus action.

This makes it possible to apply the First Fundamental Theorem of
invariant theory for systems of vectors and covectors. As a result,
all relevant polynomial invariants can be expressed in terms of
standard contractions and determinants, while the conditions of
projective invariance reduce to simple conditions on their torus
weights. This approach allows us first to describe the algebra of
absolute polynomial invariants and then to pass to relative
invariants of weight $-1$.

For absolute invariants, a particularly simple structure is obtained:
the corresponding algebra is generated by cyclic products of the
basic contractions. For relative invariants of weight $-1$, the
problem acquires a module-theoretic character. After complexification,
this space is realized as a weight subspace of the algebra of
$SL(3, \mathbb C)$-invariants, which makes it possible to apply
representation theory and investigate its graded structure.

On this basis, we prove finite generation of the module of polynomial
relative invariants of weight $-1$ over the algebra of absolute
invariants and construct an explicit finite generating system.
The minimality problem is considered separately. For $n=3$, the
corresponding module is free of rank $1$, whereas for $n=4$ a
minimal homogeneous generating system consisting of $39$ generators
is obtained.

Thus, the proposed approach reduces the problem of polynomial
projective differential invariants to standard constructions of
classical invariant theory and representation theory. This not only
complements the rational theory by its polynomial counterpart, but
also provides a structural description of the module of relative
invariants required for further applications.

The paper is organized as follows. In the first section, we study
absolute polynomial projective differential invariants. The basic
vector--covector constructions are introduced and, using the First
Fundamental Theorem of invariant theory, it is proved that the
algebra of absolute invariants is generated by cyclic invariants.

In the second section, a homogeneous realization of polynomial
relative invariants of weight $-1$ is introduced. Their
complexification is identified with a weight subspace of the algebra
of $SL(3, \mathbb C)$-invariants. Representation theory is then used
to describe the graded structure of this space and to obtain
formulas for the dimensions of its homogeneous components.

In the third section, we investigate the module structure of
polynomial relative invariants of weight $-1$. The finite generation
of the module over the algebra of absolute invariants is proved, a
finite generating system is constructed, and the problem of its
minimization is considered. For $n=3$ and $n=4$, minimal generating
systems are described explicitly.

The final section summarizes the main results and outlines directions
for further research.


\section{Absolute projective differential invariants}

The first step in the study of polynomial relative invariants
of weight $-1$ is to describe the algebra of absolute polynomial invariants,
since it is over this algebra that the space of relative invariants
forms a module.

\subsection{Basic polynomial covariants}

To each point we associate a vector and a covector
$$
A_i=
\begin{pmatrix}x_i\\y_i\\1\end{pmatrix},
\qquad
\ell_i=
\begin{pmatrix}p_i&q_i&-p_ix_i-q_iy_i\end{pmatrix},
\qquad
\ell_iA_i=0.
$$
A projective transformation acts on them according to
\begin{equation}
\label{A_ell}
A_i\longmapsto\frac{TA_i}{s_i},
\qquad
\ell_i\longmapsto s_i\ell_iT^{-1},
\end{equation}
see~\cite{B-2025}.

For distinct indices $i,j,k$, set
$$
D_{ijk}=\det(\ell_i,\ell_j,\ell_k),
\qquad
\delta_{ijk}=\det(A_i,A_j,A_k).
$$
It follows from~\eqref{A_ell} that
$$
D_{ijk}\longmapsto
\frac{s_is_js_k}{\det T}D_{ijk},
\qquad
\delta_{ijk}\longmapsto
\frac{\det T}{s_is_js_k}\delta_{ijk},
$$
see~\cite[Theorem 2.2]{B-2025}. Hence
$$
D_{ijk}\delta_{ijk}\in\mathcal A_n.
$$

For $i,j=1,\ldots,n$, introduce
$$
C_{ij}=\ell_iA_j
=
p_i(x_j-x_i)+q_i(y_j-y_i),
\qquad
C_{ii}=0.
$$

From~\eqref{A_ell} we obtain
$$
C_{ij}\longmapsto\frac{s_i}{s_j}C_{ij}.
$$
Therefore, the product
$
C_{ij}C_{ji}
$
is also an absolute invariant.

Let us consider two sets of indices
$$
I=(i_1,i_2,i_3),
\qquad
J=(j_1,j_2,j_3).
$$
Then
\begin{equation}
\label{mt}
D_{i_1i_2i_3}\delta_{j_1j_2j_3}
=
\det
\begin{pmatrix}
C_{i_1j_1}&C_{i_1j_2}&C_{i_1j_3}\\
C_{i_2j_1}&C_{i_2j_2}&C_{i_2j_3}\\
C_{i_3j_1}&C_{i_3j_2}&C_{i_3j_3}
\end{pmatrix}.
\end{equation}
Indeed, if $L_I$ consists of the rows
$\ell_{i_1},\ell_{i_2},\ell_{i_3}$, while $A_J$ consists of the columns
$A_{j_1},A_{j_2},A_{j_3}$, then
$$
L_IA_J=(C_{i_\alpha j_\beta})_{\alpha,\beta=1}^3,
$$
and therefore
$$
\det(L_IA_J)=\det(L_I)\det(A_J).
$$

In particular, for $I=J=(i,j,k)$ we have
$$
D_{ijk}\delta_{ijk}
=
C_{ij}C_{jk}C_{ki}
+
C_{ik}C_{kj}C_{ji}.
$$
Both terms on the right-hand side are absolute
polynomial invariants.

\subsection{Oriented cycles and absolute invariants}
As we have seen above, the absolute invariants of degrees $2$ and $3$
are associated with cycles of lengths two and three in the symmetric
group $S_n$. The cycle $(i,j)$ corresponds to the absolute invariant
$C_{ij}C_{ji}$, while the cycle $(ijk)$ corresponds to the absolute
invariant $C_{ij}C_{jk}C_{ki}$. This construction extends naturally
to cycles of arbitrary length.

Let
$$
\gamma=(i_1\,i_2\,\ldots\,i_r)\in S_n,
\qquad r\geq2,
$$
be a cycle of length $r$. Associate with it the polynomial
$$
Z_\gamma
=
C_{i_1i_2}C_{i_2i_3}\cdots
C_{i_{r-1}i_r}C_{i_ri_1}.
$$
Since
$$
C_{ij}\longmapsto\frac{s_i}{s_j}C_{ij},
$$
in the product $Z_\gamma$ each factor $s_{i_k}$ occurs once
in the numerator and once in the denominator. Hence
$$
Z_\gamma\in\mathcal A_n.
$$

Consider the affine variety
$$
\mathcal{M}_n=
\left\{
(A_i,\ell_i)_{i=1}^n
\in
\bigl(\mathbb C^3\oplus(\mathbb C^3)^*\bigr)^n
\;\middle|\;
\ell_iA_i=0,\quad i=1,\ldots,n
\right\}.
$$

The projective action of $PGL(3,\mathbb C)$ considered above in affine
coordinates lifts to the natural linear action of
$SL(3,\mathbb C)$ on homogeneous representatives of projective points:
$$
A_i\longmapsto GA_i,
\qquad
\ell_i\longmapsto\ell_iG^{-1},
\qquad
G\in SL(3,\mathbb C).
$$
The condition $\ell_iA_i=0$ is preserved by this action.

In addition, the torus
$
\mathbb T_n=(\mathbb C^\times)^n
$
acts naturally on $\mathcal{M}_n$.
For
$
\mathbf t=(t_1,\ldots,t_n)\in\mathbb T_n
$
the action is given by
\begin{equation}\label{tor}
A_i\longmapsto t_iA_i,
\qquad
\ell_i\longmapsto t_i^{-1}\ell_i,
\qquad
i=1,\ldots,n.
\end{equation}
This action corresponds to independent rescaling of homogeneous
representatives and leaves the projective classes
$$
[A_i]\in\mathbb P^2,
\qquad
[\ell_i]\in(\mathbb P^2)^*,
$$
unchanged.

We now prove an auxiliary theorem.

\begin{theorem}
\label{pol}
There is an isomorphism of algebras
$$
\mathcal A_n\otimes_{\mathbb R}\mathbb C
\simeq
\mathbb C[\mathcal{M}_n]^{SL(3,\mathbb C)\times\mathbb T_n},
\quad n\geq3.
$$
\end{theorem}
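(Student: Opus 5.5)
We will construct mutually inverse maps between the two algebras. The natural one runs from right to left. It is restriction to the affine slice
$$
\sigma:(x_i,y_i,p_i,q_i)_{i=1}^n\longmapsto \bigl(A_i,\ell_i\bigr)_{i=1}^n,\qquad A_i=(x_i,y_i,1)^T,\quad \ell_i=(p_i,q_i,-p_ix_i-q_iy_i),
$$
which lands in $\mathcal M_n$. For $F\in\mathbb C[\mathcal M_n]^{SL(3,\mathbb C)\times\mathbb T_n}$ we set $\Phi(F)=F\circ\sigma\in R_n\otimes\mathbb C$.

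First we check that $\Phi(F)$ is an absolute invariant. The affine-chart projective action of $g=[T]$ takes $\sigma(z)$ to the point with components $TA_i/s_i$ and $s_i\ell_iT^{-1}$, by~\eqref{A_ell}. After normalizing $T$ to $G=(\det T)^{-1/3}T\in SL(3,\mathbb C)$, this equals $\sigma(g\cdot z)$. It is obtained from $G\cdot\sigma(z)$ by the torus element $t_i=(\det T)^{1/3}/s_i$. Hence $F(\sigma(g\cdot z))=F(\sigma(z))$, and $\Phi(F)\in\mathcal A_n\otimes\mathbb C$. The map $\Phi$ is clearly an algebra homomorphism.

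Injectivity comes from a density argument. The set $\mathcal M_n^0=\{A_i^{(3)}\neq0\ \forall i\}$ is the $\mathbb T_n$-saturation of $\sigma(\mathbb C^{4n})$: any point with $A_i^{(3)}\ne0$ can be rescaled so that $A_i^{(3)}=1$. Then $\ell_iA_i=0$ forces the third coordinate of $\ell_i$ to be $-p_ix_i-q_iy_i$. Hence a $\mathbb T_n$-invariant $F$ vanishing on the image of $\sigma$ vanishes on $\mathcal M_n^0$. We therefore need $\mathcal M_n^0$ to be dense in $\mathcal M_n$. Since $\mathcal M_n$ is a product of the quadric cones $\{\ell A=0\}\subset\mathbb C^6$, and each is irreducible (a nondegenerate quadric in six variables), the nonempty open subset $\mathcal M_n^0$ is dense.

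Surjectivity is the main step. Given $f\in\mathcal A_n\otimes\mathbb C$, we define $\tilde f$ on $\mathcal M_n^0$ by
$$
\tilde f\bigl((A_i,\ell_i)\bigr)=f\Bigl(\tfrac{A_i^{(1)}}{A_i^{(3)}},\tfrac{A_i^{(2)}}{A_i^{(3)}},A_i^{(3)}\ell_i^{(1)},A_i^{(3)}\ell_i^{(2)}\Bigr).
$$
This is the unique $\mathbb T_n$-invariant extension of $f$ from the slice, and it is a regular function on $\mathcal M_n^0$ with poles only along $A_i^{(3)}=0$. Absolute invariance of $f$ under $PGL(3)$, combined with the torus invariance, shows that $\tilde f$ is $SL(3)$-invariant as a rational function on $\mathcal M_n$; this is the computation above read backwards.

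The obstacle is proving that $\tilde f$ has no poles, so that it is regular on $\mathcal M_n$. We plan to handle it with a translation trick. For $G\in SL(3)$, the function $\tilde f=\tilde f\circ G$ is regular on $G^{-1}\mathcal M_n^0$, which is the locus where the third coordinate of $GA_i$ is nonzero for all $i$. These open sets cover $\mathcal M_n\setminus\{\text{some }A_i=0\}$: for any finite set of nonzero vectors $A_i$ one can choose a linear form nonvanishing on all of them as the third row of $G$. So $\tilde f$ is regular off the union of the subsets $\{A_i=0\}$. Each such subset has codimension $3\ge2$ in the normal variety $\mathcal M_n$; normality holds because each quadric cone is normal, and products of normal varieties are normal. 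By Hartogs' extension, $\tilde f$ is therefore regular on all of $\mathcal M_n$. Its invariance under $SL(3,\mathbb C)\times\mathbb T_n$ extends by density.

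Finally, $\Phi(\tilde f)=f$, and $\Phi$ is injective, so $\Phi$ is an isomorphism.

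The hypothesis $n\ge3$ is not used essentially. It only guarantees that the invariants are nontrivial.
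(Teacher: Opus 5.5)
Your proof is correct. Your two maps (restriction to the affine section, and homogenization $\tilde f$, which is the paper's $\widehat F$) are the same as the paper's. The real difference is how you show that $\tilde f$ has no poles along $\{Z_i=0\}$.

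\emph{The paper's route.} It uses elementary commutative algebra and a single one-parameter subgroup. It proves that each $Z_i$ is a prime element of $B_n=\mathbb C[\mathcal M_n]$ and writes $\widehat F=Z_i^{-m}a$ with $a\notin Z_iB_n^{(i)}$. It then applies the derivation $\mathcal D=\sum_i(X_i\partial_{Z_i}-R_i\partial_{P_i})$ of the unipotent subgroup $I+tE_{31}$. The identity $\mathcal D\widehat F=0$ forces $mX_ia\in Z_iB_n^{(i)}$, hence $a\in Z_iB_n^{(i)}$, a contradiction.

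\emph{Your route.} You use the whole group geometrically. The $SL(3,\mathbb C)$-translates of the chart $\mathcal M_n^0$ cover the complement of $\bigcup_i\{A_i=0\}$, so any poles must lie in that set. You then remove them using normality of $\mathcal M_n$ and algebraic Hartogs.

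\emph{What each buys.} The paper's argument is self-contained: it needs no normality of products, no Serre criterion, no Hartogs. Yours is more conceptual. A polar divisor of an invariant function must be $SL(3)$-stable, and $\{Z_i=0\}$ is not. Your argument also carries over to other linearizations whenever the locus missed by all translated charts has codimension at least two.

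Two small repairs are needed.
\begin{enumerate}
\item The set $\{A_i=0\}\cap\mathcal M_n\cong\mathfrak X^{n-1}\times\mathbb C^3$ has dimension $5n-2$. Its codimension in $\mathcal M_n$ is therefore $2$, not $3$. This is still enough for Hartogs.
\item An element of $\mathcal A_n\otimes_{\mathbb R}\mathbb C$ is a priori invariant only under $PGL(3,\mathbb R)$. So ``the computation read backwards'' gives $\tilde f\circ G=\tilde f$ only for real $G$. You should add, as the paper does in its first paragraph, that Zariski density of $SL(3,\mathbb R)$ in $SL(3,\mathbb C)$ upgrades this to all complex $G$. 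Alternatively, note that your covering step already works with real $G$: finitely many nonzero vectors in $\mathbb C^3$ are avoided by some real linear form. Then apply density to the extended regular function.
\end{enumerate}
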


\begin{proof}
Since $PGL(3,\mathbb R)$ is a Zariski-dense subgroup of
$PGL(3,\mathbb C)$, invariance with respect to $PGL(3,\mathbb R)$
is equivalent to the corresponding polynomial identity on
$PGL(3,\mathbb C)$. Indeed, a polynomial function that vanishes
on a Zariski-dense subset of an algebraic variety vanishes on the
entire variety. Therefore, the complexification of the algebra of
real polynomial invariants coincides with the algebra of polynomial
invariants of the complexified action.

In order to apply the classical First Fundamental Theorem
(FFT) of invariant theory for the group $SL(3,\mathbb C)$, we need
to pass from the rational action of $PGL(3,\mathbb C)$ in affine
coordinates to the linear action on vectors and covectors.
Write arbitrary homogeneous coordinates in the form
$$
A_i=(X_i,Y_i,Z_i)^T,
\qquad
\ell_i=(P_i,Q_i,R_i),
\qquad
\ell_iA_i=0.
$$
On the open subset
$$
U_n=\{Z_1\cdots Z_n\neq0\}\subset\mathcal{M}_n,
$$
the action of the torus $\mathbb T_n$ allows us to normalize each pair
$(A_i,\ell_i)$ to the original form
$$
A_i=(x_i,y_i,1)^T,
\qquad
\ell_i=(p_i,q_i,-p_ix_i-q_iy_i),
$$
where
$$
x_i=\frac{X_i}{Z_i},
\qquad
y_i=\frac{Y_i}{Z_i},
\qquad
p_i=Z_iP_i,
\qquad
q_i=Z_iQ_i.
$$
Thus, on $U_n$, after the normalization $Z_i=1$, the homogeneous data
$(A_i,\ell_i)$ uniquely determine the original affine coordinates
$(x_i,y_i,p_i,q_i)$.
Therefore, to every polynomial absolute invariant
$$
F=F(x_i,y_i,p_i,q_i\mid i=1,\ldots,n)
$$
there corresponds on $U_n$ the rational function
\begin{equation}
\label{Fhat}
\widehat F
=
F\left(
\frac{X_i}{Z_i},
\frac{Y_i}{Z_i},
Z_iP_i,
Z_iQ_i\mid i=1,\ldots,n
\right).
\end{equation}
Since $F$ is $PGL(3,\mathbb C)$-invariant, the function
$\widehat F$ is $SL(3,\mathbb C)$-invariant; by construction,
it is also invariant under the action of $\mathbb T_n$.

We now show that all denominators in~\eqref{Fhat} cancel, so that
in fact
$
\widehat F\in\mathbb C[\mathcal{M}_n].
$
Denote the coordinate ring of $\mathcal{M}_n$ by
$$
B_n=\mathbb C[\mathcal{M}_n]
=
\mathbb C[X_i,Y_i,Z_i,P_i,Q_i,R_i\mid i=1,\ldots,n]
/
(P_iX_i+Q_iY_i+R_iZ_i\mid i=1,\ldots,n).
$$
Since the polynomial
$$
\ell_iA_i=P_iX_i+Q_iY_i+R_iZ_i
$$
is irreducible over $\mathbb C$, the hypersurface
$\{\ell_iA_i=0\}\subset\mathbb C^6$ is irreducible.
Therefore, the variety
$$
\mathcal{M}_n=\prod_{i=1}^n\{\ell_iA_i=0\}
$$
is also an irreducible affine variety, and hence its coordinate ring
$B_n=\mathbb C[\mathcal{M}_n]$ is an integral domain.

From~\eqref{Fhat} we have
$$
\widehat F\in
B_n[(Z_1\cdots Z_n)^{-1}],
$$
so any possible denominator of $\widehat F$ is a monomial in
$Z_1,\ldots,Z_n$.

Consider the one-parameter unipotent subgroup of
$SL(3,\mathbb C)$:
$$
G_t=I+tE_{31}
=
\begin{pmatrix}
1&0&0\\
0&1&0\\
t&0&1
\end{pmatrix},
\qquad t\in\mathbb C.
$$
On the coordinates of $\mathcal{M}_n$ it acts by
$$
X_i\longmapsto X_i,\qquad
Y_i\longmapsto Y_i,\qquad
Z_i\longmapsto Z_i+tX_i,
$$
$$
P_i\longmapsto P_i-tR_i,\qquad
Q_i\longmapsto Q_i,\qquad
R_i\longmapsto R_i,
$$
and hence nontrivially shifts the third coordinate $Z_i$ of each $A_i$.
The corresponding infinitesimal operator is
$$
\mathcal D
=
\sum_{i=1}^n
\left(
X_i\frac{\partial}{\partial Z_i}
-
R_i\frac{\partial}{\partial P_i}
\right).
$$
It is well defined on $B_n$, since
$$
\mathcal D(P_iX_i+Q_iY_i+R_iZ_i)
=
-R_iX_i+R_iX_i=0,
$$
and it extends to the localization
$B_n[(Z_1\cdots Z_n)^{-1}]$ by the usual quotient rule.
The $SL(3,\mathbb C)$-invariance of $\widehat F$ implies
\begin{equation}
\label{DFzero}
\mathcal D\widehat F=0.
\end{equation}

Suppose that, after all possible cancellations, $\widehat F$ still
contains a negative power of some variable $Z_i$. Localize $B_n$
at all $Z_j$, $j\ne i$, and set
$$
B_n^{(i)}=B_n[Z_j^{-1}\mid j\ne i].
$$
Then
$$
\widehat F\in B_n^{(i)}[Z_i^{-1}].
$$
Choose the smallest $m>0$ such that
$$
Z_i^m\widehat F\in B_n^{(i)},
$$
and set
$$
a=Z_i^m\widehat F.
$$
Then
$$
\widehat F=Z_i^{-m}a,
\qquad
a\in B_n^{(i)},
\qquad
a\notin Z_iB_n^{(i)},
$$
where the last condition follows from the minimality of $m$.

We note that $Z_i$ is a prime element in $B_n$.
Indeed, since the equations
$$
P_jX_j+Q_jY_j+R_jZ_j=0
$$
for different $j$ involve disjoint sets of variables, we have
$$
B_n\simeq
\bigotimes_{j=1}^n
B^{(j)},
\qquad
B^{(j)}
=
\frac{
\mathbb C[X_j,Y_j,Z_j,P_j,Q_j,R_j]
}{
(P_jX_j+Q_jY_j+R_jZ_j)
}.
$$
After taking the quotient by $(Z_i)$, only the $i$-th factor changes:
$$
\frac{B^{(i)}}{(Z_i)}
\simeq
\frac{
\mathbb C[X_i,Y_i,P_i,Q_i,R_i]
}{
(P_iX_i+Q_iY_i)
}.
$$
The polynomial $P_iX_i+Q_iY_i$ is irreducible, so this quotient
ring is an integral domain. The class of $X_i$ in this quotient
ring is nonzero, since
$$
X_i\notin(P_iX_i+Q_iY_i).
$$
The remaining factors $B^{(j)}$, $j\ne i$, are also integral domains.
Since all these rings are coordinate rings of irreducible affine
varieties over $\mathbb C$, their tensor product over $\mathbb C$
is an integral domain. Hence
$
B_n/(Z_i)
$
is an integral domain, and therefore $Z_i$ is a prime element in
$B_n$.
After localization at $Z_j$, $j\neq i$, the ring
$
B_n^{(i)}/(Z_i)
$
also remains an integral domain.

Apply the operator $\mathcal D$ to
$\widehat F=Z_i^{-m}a$. Since
$
\mathcal D(Z_i)=X_i,
$
we obtain
$$
\mathcal D\widehat F
=
-mX_iZ_i^{-m-1}a
+
Z_i^{-m}\mathcal D(a).
$$
Thus, by~\eqref{DFzero}, after multiplication by $Z_i^{m+1}$,
we get
$$
-mX_i a+Z_i\mathcal D(a)=0.
$$
Reducing modulo $(Z_i)$ gives
$
mX_i\overline a=0
$
in
$
B_n^{(i)}/(Z_i).
$
Since this ring is an integral domain,
$m\neq0$ and $X_i\neq0$, it follows that
$
\overline a=0,
$
that is,
$
a\in Z_iB_n^{(i)}.
$
This contradicts the choice of $a$.

Therefore, negative powers of $Z_i$ cannot occur. Since the index
$i$ was arbitrary, $\widehat F$ contains no negative powers of any
of the variables $Z_1,\ldots,Z_n$. Thus, all denominators in
\eqref{Fhat} cancel and
$$
{\widehat F\in B_n=\mathbb C[\mathcal{M}_n].}
$$

Hence the map
$
F\longmapsto\widehat F
$
defines an injective algebra homomorphism
$$
\mathcal A_n\otimes_{\mathbb R}\mathbb C
\longrightarrow
\mathbb C[\mathcal{M}_n]^{SL(3,\mathbb C)\times\mathbb T_n}.
$$
Injectivity follows from the fact that on $U_n$ the function $F$
is recovered by restriction to
$$
A_i=(x_i,y_i,1)^T,
\qquad
\ell_i=(p_i,q_i,-p_ix_i-q_iy_i).
$$

Conversely, let
$$
H\in
\mathbb C[\mathcal{M}_n]^{SL(3,\mathbb C)\times\mathbb T_n}.
$$
Restrict $H$ to the affine section
$$
A_i=(x_i,y_i,1)^T,
\qquad
\ell_i=(p_i,q_i,-p_ix_i-q_iy_i).
$$
Since $H$ is a polynomial in the homogeneous coordinates, this
substitution yields a polynomial
$$
F_H\in
\mathbb C[x_i,y_i,p_i,q_i\mid i=1,\ldots,n].
$$
The $SL(3,\mathbb C)$-invariance of $H$, together with its
$\mathbb T_n$-invariance, shows that $F_H$ is an absolute projective
invariant.

On the open subset $U_n$, the function $H$ is recovered from
$F_H$ by~\eqref{Fhat}, since every point of $U_n$ can be brought
to the above affine section by means of $\mathbb T_n$.
Thus $H=\widehat{F_H}$ on the dense open subset $U_n$, and since
both functions are regular on $\mathcal{M}_n$, we have
$
H=\widehat{F_H}
$
on all of $\mathcal{M}_n$.

Thus, the constructed maps are mutually inverse, and
$$
{
\mathcal A_n\otimes_{\mathbb R}\mathbb C
\simeq
\mathbb C[\mathcal{M}_n]^{SL(3,\mathbb C)\times\mathbb T_n}.
}
$$
\end{proof}

We can now prove the main result of this section.

\begin{theorem}
\label{thm:cycle-generation}
For every $n\geq3$, the algebra of polynomial absolute
joint differential projective invariants is generated by
cyclic invariants:
$$
\mathcal A_n
=
\mathbb R\bigl[
Z_\gamma
\mid
\gamma\in S_n \text{ is a cycle of length } r\geq2
\bigr],
$$
where for a cycle
$
\gamma=(i_1\,i_2\,\ldots\,i_r)
$
we set
$$
Z_\gamma
=
C_{i_1i_2}C_{i_2i_3}\cdots C_{i_ri_1}.
$$
\end{theorem}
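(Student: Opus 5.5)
By Theorem~\ref{pol}, it suffices to describe $\mathbb C[\mathcal M_n]^{SL(3,\mathbb C)\times\mathbb T_n}$ and then descend to $\mathbb R$. Since the $Z_\gamma$ have real coefficients, a $\mathbb C$-spanning set of real polynomials also spans over $\mathbb R$ after taking real parts. The plan is to take invariants in two stages: first under $SL(3,\mathbb C)$, then under the torus $\mathbb T_n$, which commutes with it. We also need to pass from $\mathcal M_n$ to the ambient space $V=(\mathbb C^3\oplus(\mathbb C^3)^*)^n$.

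\textbf{Step 1: lifting from $\mathcal M_n$ to $V$.} The group $G=SL(3,\mathbb C)\times\mathbb T_n$ is reductive. The ideal $(\ell_iA_i)$ is $G$-stable, and $\mathcal M_n\subset V$ is a closed $G$-stable subvariety. By the Reynolds operator (linear reductivity), restriction $\mathbb C[V]^G\to\mathbb C[\mathcal M_n]^G$ is surjective. So every invariant on $\mathcal M_n$ is the restriction of a polynomial invariant on $V$.

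\textbf{Step 2: FFT for $SL(3)$ on $V$.} By the First Fundamental Theorem for $SL(3,\mathbb C)$ acting on $n$ vectors and $n$ covectors, $\mathbb C[V]^{SL(3)}$ is generated by three families:
\[
\ell_iA_j,\qquad \det(A_i,A_j,A_k),\qquad \det(\ell_i,\ell_j,\ell_k).
\]
Under $\mathbb T_n$ each generator is a weight vector. $\ell_iA_j$ has weight $t_j t_i^{-1}$, $\delta_{ijk}$ has weight $t_it_jt_k$, and $D_{ijk}$ has weight $(t_it_jt_k)^{-1}$. Taking $\mathbb T_n$-invariants of an algebra generated by weight vectors gives the span of the weight-zero monomials in these generators. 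So $\mathbb C[V]^G$ is spanned by monomials in $C_{ij}$ ($i\ne j$), $\ell_iA_i$, $\delta$'s and $D$'s with total weight zero.

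\textbf{Step 3: removing the determinants.} This is the main obstacle. A weight-zero monomial must contain equally many $\delta$- and $D$-factors. This follows by summing weights: the $C$-factors contribute total degree $0$, each $\delta$ contributes $+3$, and each $D$ contributes $-3$. So the $\delta$'s and $D$'s can be paired up arbitrarily, and each product $D_I\delta_J$ can be replaced by the $3\times 3$ determinant of $C$'s via \eqref{mt}. That identity holds on all of $V$ if we use $\ell_iA_i$ in place of $C_{ii}$, and on $\mathcal M_n$ we have $\ell_iA_i=0$. After restriction to $\mathcal M_n$, every invariant is therefore a linear combination of weight-zero monomials in the $C_{ij}$ with $i\ne j$. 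The ordering of the pairing does not matter, since any pairing works.

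\textbf{Step 4: cycle decomposition.} A monomial $\prod C_{ij}^{m_{ij}}$ defines a directed multigraph on $\{1,\dots,n\}$ with $m_{ij}$ edges $i\to j$. The weight-zero condition for $t_k$ says that in-degree equals out-degree at every vertex $k$, so the multigraph is Eulerian. Every such multigraph decomposes into edge-disjoint directed simple cycles. A simple cycle of length $r\ge2$ visits distinct vertices, which is exactly a cycle $\gamma\in S_n$, and it contributes the factor $Z_\gamma$. Hence each monomial is a product of $Z_\gamma$'s. Combined with the inclusion $Z_\gamma\in\mathcal A_n$ already shown, this gives the equality. Finally, transport the result back through the isomorphism of Theorem~\ref{pol}, under which $C_{ij}$ corresponds to itself on the affine section, and take real forms to conclude.
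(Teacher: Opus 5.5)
Your proposal is correct and follows essentially the same route as the paper. Both use the FFT for $SL(3,\mathbb C)$ together with reductivity to pass to $\mathcal M_n$, use torus weights to force equally many $D$- and $\delta$-factors, eliminate each product $D_I\delta_J$ via \eqref{mt}, peel off cycles from the in-degree/out-degree balance, and then descend to $\mathbb R$. The differences are only cosmetic: you lift $SL(3,\mathbb C)\times\mathbb T_n$-invariants to the ambient space and carry the factors $\ell_iA_i$ explicitly before restricting, and you cite the Eulerian cycle decomposition where the paper gives an explicit walk-and-induct argument.
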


\begin{proof}

By the First Fundamental Theorem, the invariant algebra
$\mathbb C[\bigl(\mathbb C^3\oplus(\mathbb C^3)^*\bigr)^n]^{SL(3,\mathbb C)}$
is generated by the contractions $\ell_iA_j$ and the determinants
$D_{ijk}$ and $\delta_{ijk}$; see
\cite[Chapter II]{Weyl46} and
\cite[Chapter 11]{Procesi}.
Since
$\mathcal{M}_n\subset
\bigl(\mathbb C^3\oplus(\mathbb C^3)^*\bigr)^n$
is defined by the $SL(3,\mathbb C)$-invariant equations
$$
C_{ii}=\ell_iA_i=0,
$$
and $SL(3,\mathbb C)$ is reductive, the restrictions of these
generators also generate
$\mathbb C[\mathcal{M}_n]^{SL(3,\mathbb C)}$.

Recall that a character of the torus
$
\mathbb T_n=(\mathbb C^\times)^n
$
has the form
$$
\chi_{\boldsymbol\alpha}(\mathbf t)
=
t_1^{\alpha_1}\cdots t_n^{\alpha_n},
\qquad
\boldsymbol\alpha=(\alpha_1,\ldots,\alpha_n)\in\mathbb Z^n.
$$
If a function $f$ transforms under the torus action~\eqref{tor}
according to
$$
f(\mathbf t\cdot z)
=
\chi_{\boldsymbol\alpha}(\mathbf t)f(z),
$$
then the vector $\boldsymbol\alpha$ is called the \textit{weight}
of $f$ and is denoted by $\operatorname{wt}(f)$.

Let
$
e_1,\ldots,e_n
$
denote the standard basis of the character group $\mathbb Z^n$.
For the invariant generators introduced above, their weights are
computed directly from~\eqref{tor}. We have
\begin{equation}
\operatorname{wt}(C_{ij})=e_j-e_i, \qquad
\operatorname{wt}(D_{ijk})=-e_i-e_j-e_k,
\qquad
\operatorname{wt}(\delta_{ijk})=e_i+e_j+e_k.
\label{tag6}
\end{equation}
If the weight of a monomial is zero, then the sum of the coordinates
of its weight is also zero. Therefore, vanishing of the sum of the
weight coordinates is a necessary condition for the weight itself
to vanish.

All the generators are weight vectors for the torus action.
Therefore, the torus-invariant subalgebra consists of linear
combinations of zero-weight monomials in these weight vectors and
is consequently generated by such monomials.
Suppose such a monomial contains $a$ factors of type $D$
and $b$ factors of type $\delta$. By~\eqref{tag6}, the sum of the
coordinates of its weight equals
$
3(b-a).
$
Thus, zero weight requires
$
a=b.
$

Hence all factors $D$ can be paired with factors $\delta$.
For each such pair, identity~\eqref{mt} expresses their product
in terms of the contractions $C_{ij}$, $i\neq j$.
Therefore, every $SL(3,\mathbb C)$-invariant on $\mathcal{M}_n$
that is also invariant under $\mathbb T_n$ is a polynomial in
$C_{ij}$, $i\neq j$. Hence
$$
\mathcal A_n\otimes_{\mathbb R}\mathbb C
\subseteq
\mathbb C[C_{ij}\mid i\neq j].
$$

It remains to describe the $\mathbb T_n$-invariant monomials in
the $C_{ij}$. Consider a monomial
$$
M=\prod_{i\neq j}C_{ij}^{a_{ij}},
\qquad
a_{ij}\in\mathbb Z_{\geq0},
\qquad
a_{ii}=0.
$$
Its weight is
$$
\operatorname{wt}(M)
=
\sum_{i,j=1}^n a_{ij}(e_j-e_i).
$$
The coefficient of $e_i$ in this weight is
$$
\sum_{j=1}^n a_{ji}
-
\sum_{j=1}^n a_{ij}.
$$
Therefore, the condition $\operatorname{wt}(M)=0$ is equivalent
to the system
\begin{equation}
\label{tag8}
\sum_{j=1}^n a_{ij}
=
\sum_{j=1}^n a_{ji},
\qquad
i=1,\ldots,n.
\end{equation}
Condition~\eqref{tag8} means that for every index $i$, the sum of
the entries in the $i$-th row of the matrix $\boldsymbol a$ equals
the sum of the entries in its $i$-th column.

Every $\mathbb T_n$-invariant polynomial is a sum of zero-weight
monomials. Thus, it suffices to show that every monomial $M$
satisfying~\eqref{tag8} is a product of cyclic invariants.

Suppose $M\neq1$, and choose a pair $(i_1,i_2)$ such that
$
a_{i_1i_2}>0.
$
For the index $i_2$, condition~\eqref{tag8} has the form
$$
\sum_{j=1}^n a_{i_2j}
=
\sum_{j=1}^n a_{ji_2}.
$$
Since $a_{i_1i_2}>0$, the right-hand side of this equality is
positive. Hence
$$
\sum_{j=1}^n a_{i_2j}>0,
$$
and therefore, since $a_{i_2i_2}=0$, there exists
$i_3\neq i_2$ such that
$
a_{i_2i_3}>0.
$
Continuing this process, we obtain a sequence
$$
i_1,i_2,i_3,\ldots
$$
such that
$
a_{i_si_{s+1}}>0
$
for all $s$. Since the set of indices is finite, some index in this
sequence must repeat. Taking the first such repetition, we obtain
pairwise distinct indices
$
j_1,\ldots,j_r
$
such that
$$
a_{j_1j_2}>0,\quad
a_{j_2j_3}>0,\quad\ldots,\quad
a_{j_rj_1}>0.
$$
Hence
$$
\gamma=(j_1\,j_2\,\ldots\,j_r)\in S_n
$$
is a cycle of length $r\geq2$, and the corresponding cyclic invariant
$$
Z_\gamma
=
C_{j_1j_2}C_{j_2j_3}\cdots C_{j_rj_1}
$$
divides $M$.
Write
$
M=Z_\gamma M_1.
$
The exponents of the monomial $M_1$ remain nonnegative and again
satisfy~\eqref{tag8}, since at every index $j_s$ we simultaneously
decrease by one one exponent corresponding to
$C_{j_sj_{s+1}}$ and one exponent corresponding to
$C_{j_{s-1}j_s}$.
Moreover,
$$
\deg M_1=\deg M-r<\deg M.
$$
Therefore, by repeating this process and applying induction on
$\deg M$, we obtain
$$
M=Z_{\gamma_1}\cdots Z_{\gamma_m},
$$
where each $\gamma_s\in S_n$ is a cycle of length at least two.
Thus, all $\mathbb T_n$-invariant monomials in the $C_{ij}$ are
generated by the cyclic invariants $Z_\gamma$.
Since all $Z_\gamma$ are defined over $\mathbb R$, the result over
$\mathbb C$ descends to $\mathbb R$. Hence
$$
\mathcal A_n
=
\mathbb R[
Z_\gamma\mid
\gamma\in S_n\text{ is a cycle of length }r\geq2
].
$$
\end{proof}

The general theorem on cyclic generators can be readily specialized
to small values of $n$.

\begin{example}{\rm

In the group $S_3$ there are three cycles of length $2$. Denote the
corresponding cyclic invariants by
$$
A_{12}=C_{12}C_{21},
\qquad
A_{13}=C_{13}C_{31},
\qquad
A_{23}=C_{23}C_{32}.
$$
The two cycles of length $3$ give the invariants
$$
T_{123}=C_{12}C_{23}C_{31},
\qquad
T_{132}=C_{13}C_{32}C_{21}.
$$

By Theorem~\ref{thm:cycle-generation}, these five invariants
generate $\mathcal A_3$.
We note that they satisfy the relation
$
T_{123}T_{132}
=
A_{12}A_{13}A_{23}.
$
}
\begin{flushright}
$\triangle$
\end{flushright}
\end{example}

\begin{example}{\rm
For $n=4$, the cycles in $S_4$ of length at least $2$ have lengths
$2$, $3$, and $4$.
The cycles of length $2$ give six invariants of the form
$
A_{ij}=C_{ij}C_{ji}:
$
$$
A_{12},\ A_{13},\ A_{14},\
A_{23},\ A_{24},\ A_{34}.
$$

For each triple
$
1\leq i<j<k\leq4
$
there are two cycles of length $3$, giving invariants of the two forms
$$
T_{ijk}=C_{ij}C_{jk}C_{ki},
\qquad
T_{ikj}=C_{ik}C_{kj}C_{ji}.
$$
Thus, we obtain eight invariants
$$
T_{123},\ T_{132},\
T_{124},\ T_{142},\
T_{134},\ T_{143},\
T_{234},\ T_{243}.
$$

Finally, the six cycles of length $4$ give the invariants
\begin{gather*}
Q_{1234}=C_{12}C_{23}C_{34}C_{41}, \quad
Q_{1243}=C_{12}C_{24}C_{43}C_{31},  \quad
Q_{1324}=C_{13}C_{32}C_{24}C_{41},\\
Q_{1342}=C_{13}C_{34}C_{42}C_{21}, \quad
Q_{1423}=C_{14}C_{42}C_{23}C_{31},  \quad
Q_{1432}=C_{14}C_{43}C_{32}C_{21}.
\end{gather*}

By Theorem~\ref{thm:cycle-generation}, all these twenty cyclic
invariants generate $\mathcal A_4$.

There are nontrivial algebraic relations among these generators.
For example, for every triple $i<j<k$ one has
$$
T_{ijk}T_{ikj}
=
A_{ij}A_{ik}A_{jk}.
$$

Moreover, one of the six invariants of length $4$ can be eliminated.
Indeed, since the rank of the matrix
$
(C_{ij})_{i,j=1}^4
$
is at most $3$, its determinant vanishes.
Expanding this determinant gives
$$
Q_{1234}+Q_{1243}+Q_{1324}
+Q_{1342}+Q_{1423}+Q_{1432}
=
A_{12}A_{34}
+
A_{13}A_{24}
+
A_{14}A_{23}.
$$
The full ideal of relations among the above generators is not
considered here.
}
\begin{flushright}
$\triangle$
\end{flushright}
\end{example}

\section{Polynomial relative invariants of weight $-1$}
\label{sec:relative-polynomial}

The main objective of this section is to describe the graded structure
of the infinite-dimensional vector space
$\mathcal P_{n,-1}$.
In what follows, it will be convenient to work with its complexification
$$
\mathcal P_{n,-1}\otimes_{\mathbb R}\mathbb C,
$$
since the graded structure is preserved under extension of the
scalar field, while the dimensions of the corresponding homogeneous
components over $\mathbb R$ and $\mathbb C$ coincide.
Therefore, the results obtained for the complexification transfer
directly to $\mathcal P_{n,-1}$.

\subsection{Homogeneous realization of $\mathcal P_{n,-1}\otimes_{\mathbb R}\mathbb C,
$ and its graded structure}

Passing from the affine coordinates
$(x_i,y_i,p_i,q_i)$ to the pairs
$$
(A_i,\ell_i)\in
\mathbb C^3\oplus(\mathbb C^3)^*,
\qquad
\ell_iA_i=0,
$$
we obtain a homogeneous representation of the original problem.
The rescaling
$$
A_i\longmapsto t_iA_i,
\qquad
\ell_i\longmapsto t_i^{-1}\ell_i
$$
does not change the corresponding affine coordinates and defines
an action of the torus $\mathbb T_n$ on $\mathcal{M}_n$. Therefore,
polynomial relative invariants of weight $-1$ can naturally be
sought among $SL(3,\mathbb C)$-invariant polynomials on
$\mathcal{M}_n$ having the corresponding torus weight.

Consider the vector space
$$
\mathcal W_n
=
\left\{
H\in\mathbb C[\mathcal{M}_n]^{SL(3,\mathbb C)}
\;\middle|\;
H(\mathbf t\cdot z)
=
(t_1\cdots t_n)^{-3}H(z)
\right\},
$$
where
$
\mathbf t=(t_1,\ldots,t_n)\in\mathbb T_n
$
and the torus action is given by~\eqref{tor}.
The following theorem shows that this homogeneous realization
corresponds exactly to the complexification of the space
$\mathcal P_{n,-1}$.

\begin{theorem}
\label{thm:relative-homogeneous}
There is an isomorphism
$$
{
\mathcal P_{n,-1}\otimes_{\mathbb R}\mathbb C
\simeq
\mathcal W_n, \quad n \geq 3.
}
$$
\end{theorem}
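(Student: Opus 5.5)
The plan is to adapt the proof of Theorem~\ref{pol}, inserting a factor that accounts for the Jacobian multiplier. First, by the same Zariski-density argument used there, the complexification $\mathcal P_{n,-1}\otimes_{\mathbb R}\mathbb C$ coincides with the space of complex polynomials $F$ satisfying $F(g\cdot z)=J(g)^{-1}F(z)$ for all $g\in PGL(3,\mathbb C)$. Note that $J(g)=(\det T)^n/\prod s_i^3$ is homogeneous of degree $0$ in $T$, so it is well defined on $PGL(3,\mathbb C)$. Over $\mathbb C$ every class $g$ has a representative $T$ with $\det T=1$, so it suffices to impose $F(g\cdot z)=\prod_i s_i^{3}\,F(z)$ for $T\in SL(3,\mathbb C)$.

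Given such an $F$, define on $U_n=\{Z_1\cdots Z_n\ne0\}$ the function
$$
\widehat F=(Z_1\cdots Z_n)^{-3}\,F\!\left(\frac{X_i}{Z_i},\frac{Y_i}{Z_i},Z_iP_i,Z_iQ_i\mid i=1,\ldots,n\right).
$$
Under the torus action \eqref{tor}, the argument of $F$ is invariant and $Z_i\mapsto t_iZ_i$. Hence $\widehat F$ has exactly the weight $(t_1\cdots t_n)^{-3}$ required in $\mathcal W_n$.

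For $G\in SL(3,\mathbb C)$, the third homogeneous coordinate transforms as $Z_i\mapsto s_iZ_i$, where $s_i$ is the denominator of the projective map evaluated at the affine point $(X_i/Z_i,Y_i/Z_i)$. The normalized data transform by the projective action \eqref{A_ell}. Therefore
$$
\widehat F(G z)=\prod_i(s_iZ_i)^{-3}\prod_i s_i^3\,F(\cdot)=\widehat F(z),
$$
so $\widehat F$ is $SL(3,\mathbb C)$-invariant. The assignment $F\mapsto\widehat F$ is clearly linear and injective, since $F$ is recovered by setting $Z_i=1$.

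The key step is to show that $\widehat F\in B_n=\mathbb C[\mathcal M_n]$, i.e.\ that no negative powers of $Z_i$ survive. The main obstacle is that we have deliberately introduced the poles $Z_i^{-3}$, so one must check that the unipotent argument still kills them. I will repeat the argument of Theorem~\ref{pol} verbatim. A priori $\widehat F\in B_n[(Z_1\cdots Z_n)^{-1}]$, and $SL(3,\mathbb C)$-invariance gives $\mathcal D\widehat F=0$ for the operator $\mathcal D=\sum_i(X_i\partial_{Z_i}-R_i\partial_{P_i})$. If a negative power $Z_i^{-m}$ with $m>0$ remained, write $\widehat F=Z_i^{-m}a$ with $a\notin Z_iB_n^{(i)}$. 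Then $-mX_ia+Z_i\mathcal D(a)=0$ forces $X_i\bar a=0$ in the domain $B_n^{(i)}/(Z_i)$, which is a contradiction. This argument uses only $SL$-invariance and the primality of $Z_i$, not torus invariance, so it applies unchanged to our $\widehat F$.

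For the converse, take $H\in\mathcal W_n$ and restrict it to the affine section $A_i=(x_i,y_i,1)^T$, $\ell_i=(p_i,q_i,-p_ix_i-q_iy_i)$ to obtain a polynomial $F_H$. To compute $F_H(g\cdot z)$, apply $G\in SL(3,\mathbb C)$; this sends the section point to a point with $Z_i=s_i$. Then rescale by the torus with $t_i=s_i^{-1}$ to return to the section. $SL$-invariance together with the weight $(t_1\cdots t_n)^{-3}=\prod s_i^3$ gives $F_H(g\cdot z)=\prod_is_i^3F_H(z)=J(g)^{-1}F_H(z)$. So $F_H$ is a relative invariant of weight $-1$.

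Finally, $\widehat{F_H}$ and $H$ agree on $U_n$ because every point of $U_n$ can be moved onto the section by $\mathbb T_n$ and both sides have the same torus weight. Since both are regular and $U_n$ is dense in the irreducible variety $\mathcal M_n$, they agree everywhere. Thus the two maps are mutually inverse, which proves the isomorphism.
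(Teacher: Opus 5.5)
Your proposal is correct and follows the same route as the paper. It uses the map $F\mapsto (Z_1\cdots Z_n)^{-3}F(X_i/Z_i,Y_i/Z_i,Z_iP_i,Z_iQ_i)$, the unipotent-derivation/primality argument of Theorem~\ref{pol} for regularity, and restriction to the affine section for the inverse, while supplying details the paper only asserts: the $\det T=1$ representatives, the explicit $SL(3,\mathbb C)$-invariance computation, the rescaling $t_i=s_i^{-1}$ in the converse, and the observation that the regularity argument uses only $SL(3,\mathbb C)$-invariance.
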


\begin{proof}
The proof is completely analogous to that of
Theorem~\ref{pol}. For
$$
F\in\mathcal P_{n,-1}\otimes_{\mathbb R}\mathbb C,
$$
on the open subset
$
U_n=\{Z_1\cdots Z_n\neq0\}
$
consider the function
$$
\widetilde F
=
\frac{1}{(Z_1\cdots Z_n)^3}
F\left(
\frac{X_i}{Z_i},
\frac{Y_i}{Z_i},
Z_iP_i,
Z_iQ_i \mid i=1 \ldots n
\right).
$$
The relative invariance condition
$$
F(g\cdot z)=J(g)^{-1}F(z), \quad g \in PGL(3,\mathbb C),
$$
is exactly compensated by the factor
$(Z_1\cdots Z_n)^{-3}$, and hence $\widetilde F$ is
$SL(3,\mathbb C)$-invariant. With respect to the torus action, we have
$$
\widetilde F(\mathbf t\cdot z)
=
(t_1\cdots t_n)^{-3}\widetilde F(z).
$$

The regularity of $\widetilde F$ on the whole of $\mathcal{M}_n$
is proved by exactly the same argument as in the proof of
Theorem~\ref{pol}. The inverse map is obtained by restriction
to the affine section
$$
A_i=(x_i,y_i,1)^T,
\qquad
\ell_i=(p_i,q_i,-p_ix_i-q_iy_i).
$$
Hence
$$
\mathcal P_{n,-1}\otimes_{\mathbb R}\mathbb C
\simeq
\mathcal W_n.
$$
\end{proof}


For the further description, introduce the set of triples
$$
\mathcal T_n
=
\{I\subset[n]\mid |I|=3\}.
$$
For a triple
$
I=\{i,j,k\},
\qquad i<j<k
$
we write
$
D_I=D_{ijk}.
$

Consider the vector of length $\binom{n}{3}$
$$
\boldsymbol b=(b_I)_{I\in\mathcal T_n},
\qquad
b_I\in\mathbb Z_{\geq0},
$$
and a nonnegative integer $n\times n$ matrix with zero diagonal
$$
\boldsymbol a=(a_{ij}),
\qquad
a_{ij}\in\mathbb Z_{\geq0}.
$$

Denote by
$
|\boldsymbol b|
$
the sum of the components of the vector
$
\boldsymbol b
$
and, for each $i\in[n]$, set
$$
d_{\boldsymbol b}(i)
=
\sum_{\substack{I\in\mathcal T_n\\ i\in I}}b_I.
$$
Thus, $d_{\boldsymbol b}(i)$ is equal to the sum of the
components of the vector $\boldsymbol b$ indexed by all triples
$I\in\mathcal T_n$ containing the index $i$.

For the matrix $\boldsymbol a$, set
$$
\partial_{\boldsymbol a}(i)
=
\sum_{j=1}^n a_{ij}
-
\sum_{j=1}^n a_{ji},
\qquad
i=1,\ldots,n.
$$
Thus, $\partial_{\boldsymbol a}(i)$ is the difference
between the sum of the entries in the $i$-th row and the sum
of the entries in the $i$-th column of the matrix $\boldsymbol a$.

We call a pair
$
(\boldsymbol b,\boldsymbol a)
$
\textit{admissible} if the conditions
$
|\boldsymbol b|=n
$
and
\begin{equation}
\label{eq:admissible-balance}
d_{\boldsymbol b}(i)
+
\partial_{\boldsymbol a}(i)
=
3,
\qquad
i=1,\ldots,n
\end{equation}
are satisfied.

To each admissible pair we associate the monomial
\begin{equation}
\label{eq:Pba}
P_{\boldsymbol b,\boldsymbol a}
=
\prod_{I\in\mathcal T_n}D_I^{b_I}
\prod_{i\neq j}C_{ij}^{a_{ij}}.
\end{equation}

By the $A$-degree of such a monomial we mean its
total degree in the coordinates of the vectors $A_i$, that is,
in the variables $X_i,Y_i,Z_i$, while by its $C$-degree we mean
the sum of the exponents of all factors $C_{ij}$.

\begin{theorem}
\label{Wn-spanning}
The vector space $\mathcal W_n$ is spanned by the
monomials~\eqref{eq:Pba} corresponding to all admissible
pairs $(\boldsymbol b,\boldsymbol a)$:
$$
{
\mathcal W_n
=
\operatorname{span}_{\mathbb C}
\left\{
P_{\boldsymbol b,\boldsymbol a}
\;\middle|\;
(\boldsymbol b,\boldsymbol a)
\text{ is an admissible pair}
\right\}.
}
$$
\end{theorem}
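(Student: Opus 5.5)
Following the proof of Theorem~\ref{thm:cycle-generation}, we first describe $\mathbb C[\mathcal M_n]^{SL(3,\mathbb C)}$ by generators, and then extract the torus weight component $(t_1\cdots t_n)^{-3}$, i.e.\ weight $-3(e_1+\cdots+e_n)$ in the notation~\eqref{tag6}.

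By the First Fundamental Theorem, together with reductivity of $SL(3,\mathbb C)$ and the $SL(3,\mathbb C)$-invariance of the equations $C_{ii}=0$, the algebra $\mathbb C[\mathcal M_n]^{SL(3,\mathbb C)}$ is generated by the $C_{ij}$ $(i\ne j)$, $D_{ijk}$ and $\delta_{ijk}$. All of these are $\mathbb T_n$-weight vectors. Hence the $\mathbb T_n$-isotypic component $\mathcal W_n$ of weight $\chi=-3\sum_i e_i$ is spanned by the monomials in these generators whose weight equals $\chi$. The $\mathbb T_n$-action commutes with $SL(3,\mathbb C)$, so projecting onto a weight component preserves invariance; this justifies the reduction to monomials.

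Next, we show that monomials containing factors $\delta$ may be discarded. Take a monomial with $a$ factors of type $D$, $b$ factors of type $\delta$, and $C$-part $\boldsymbol a$. Summing the coordinates of its weight gives $3(b-a)=-3n$, so $a=b+n$. Whenever $b>0$, we pair each $\delta_J$ with some $D_I$ and use~\eqref{mt} to rewrite $D_I\delta_J$ as a polynomial in the $C_{ij}$. Here we must be careful about repeated indices in $J$ or $I$: $\delta_J$ with repeated indices vanishes, so only genuine triples occur. The diagonal entries $C_{ii}=0$ appearing in the $3\times 3$ determinant simply drop out. This rewriting is weight-homogeneous, so after expanding the determinant we obtain a linear combination of monomials $\prod D_I^{b_I}\prod C_{ij}^{a_{ij}}$ of the same weight $\chi$, now with exactly $|\boldsymbol b|=n$ factors of type $D$. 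Also, $D_{ijk}$ with unordered indices equals $\pm D_I$ with $I$ sorted, so we may index the $D$-factors by $I\in\mathcal T_n$.

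It remains to translate the weight condition. By~\eqref{tag6}, the coefficient of $e_i$ in $\operatorname{wt}\bigl(\prod D_I^{b_I}\prod C_{ij}^{a_{ij}}\bigr)$ equals $-d_{\boldsymbol b}(i)+\sum_j a_{ji}-\sum_j a_{ij}=-d_{\boldsymbol b}(i)-\partial_{\boldsymbol a}(i)$. Setting this equal to $-3$ gives exactly~\eqref{eq:admissible-balance}, and together with $|\boldsymbol b|=n$ the pair is admissible. Conversely, each $P_{\boldsymbol b,\boldsymbol a}$ with admissible $(\boldsymbol b,\boldsymbol a)$ is $SL(3,\mathbb C)$-invariant (as a product of invariants) and has weight $\chi$, so it lies in $\mathcal W_n$. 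This gives equality of the spans.

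The only delicate point is the elimination of the $\delta$-factors. We have to check that~\eqref{mt} holds on $\mathcal M_n$ for arbitrary, possibly overlapping, index triples, and that the resulting expansion stays inside the class of monomials~\eqref{eq:Pba}. The overlap is harmless because~\eqref{mt} is an identity $\det(L_I)\det(A_J)=\det(L_IA_J)$ valid for any rows and columns. Everything else is bookkeeping with the weights.
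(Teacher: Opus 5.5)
Your proposal is correct and takes essentially the same route as the paper: FFT generators, the coordinate-sum count forcing $n$ more $D$-factors than $\delta$-factors, elimination of each $\delta_J$ by pairing it with a $D_I$ via~\eqref{mt}, translation of the remaining weight condition into~\eqref{eq:admissible-balance}, and the converse inclusion. Your extra remarks (why it suffices to work with weight-homogeneous monomials, overlapping index triples, and the sign when sorting $D_{ijk}$) only spell out details that the paper leaves implicit.
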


\begin{proof}
By the First Fundamental Theorem for the group $SL(3,\mathbb C)$,
as shown above, the invariant algebra
$$
\mathbb C[\mathcal{M}_n]^{SL(3,\mathbb C)}
$$
is generated by the contractions $C_{ij}$ and the determinants
$D_I$ and $\delta_J$, where $I,J\in\mathcal T_n$. Therefore,
we seek the required torus weight vectors among monomials in these
generators
$$
M
=
\prod_{I\in\mathcal T_n} D_I^{b_I}
\prod_{J\in\mathcal T_n} \delta_J^{c_J}
\prod_{i\neq j} C_{ij}^{a_{ij}},
$$
which transform according to the character
$
(t_1\cdots t_n)^{-3}.
$

The sum of the coordinates of the weight of each $C_{ij}$ is zero,
since
$
\operatorname{wt}(C_{ij})=e_j-e_i
$;
each factor $D_I$ contributes $-3$ to the sum, while each
$\delta_J$ contributes $3$. Since the weight of an element of
$\mathcal W_n$ is
$$
-3(e_1+\cdots+e_n),
$$
we have
$
-3|\boldsymbol b|+3|\boldsymbol c|=-3n,
$
and hence obtain the condition
\begin{equation}
\label{Ddelta-difference}
|\boldsymbol b|-|\boldsymbol c|=n.
\end{equation}

Thus, there are $n$ more factors of type $D$ than factors
of type $\delta$. Using the determinantal identity~\eqref{mt},
successively pair each factor $\delta_J$ with one factor
$D_I$. After expanding the determinants, we obtain a linear
combination of monomials containing only $D_I$ and $C_{ij}$.
It follows from~\eqref{Ddelta-difference} that exactly
$n$ factors $D_I$ remain in each such monomial. Therefore,
it can be written in the form $P_{\boldsymbol b,\boldsymbol a}$
with
$
|\boldsymbol b|=n.
$
It remains to determine the condition on the exponents
$\boldsymbol b$ and $\boldsymbol a$.
Since
$$
\operatorname{wt}(D_I)
=
-\sum_{i\in I}e_i,
\qquad
\operatorname{wt}(C_{ij})
=
e_j-e_i,
$$
the coefficient of $e_i$ in the weight of
$P_{\boldsymbol b,\boldsymbol a}$ is
$$
-d_{\boldsymbol b}(i)
-
\sum_{i,j=1}^n a_{ij}
+
\sum_{i,j=1}^n a_{ji}
=
-d_{\boldsymbol b}(i)
-\partial_{\boldsymbol a}(i).
$$
For membership in $\mathcal W_n$, this coefficient must be
equal to $-3$. Hence
$$
d_{\boldsymbol b}(i)
+
\partial_{\boldsymbol a}(i)
=
3,
\qquad
i=1,\ldots,n.
$$
Thus, after eliminating all factors $\delta_J$, every monomial
in $\mathcal W_n$ becomes a linear combination of monomials
of the form~\eqref{eq:Pba} corresponding to admissible pairs.

Conversely, let $(\boldsymbol b,\boldsymbol a)$ be an admissible
pair. The quantities $D_I$ and $C_{ij}$ are
$SL(3,\mathbb C)$-invariants, and therefore
$P_{\boldsymbol b,\boldsymbol a}$ is also an
$SL(3,\mathbb C)$-invariant. Moreover,
\eqref{eq:admissible-balance} implies
$$
\operatorname{wt}
(P_{\boldsymbol b,\boldsymbol a})
=
-3(e_1+\cdots+e_n).
$$
Hence
$
P_{\boldsymbol b,\boldsymbol a}\in\mathcal W_n.
$
The theorem is proved.
\end{proof}

For the further study of the space $\mathcal W_n$, it is useful
to describe it in terms of irreducible representations of
$SL(3,\mathbb C)$, using the bigrading of the coordinate ring
and the torus weight $(-3,\ldots,-3)$.

\begin{theorem}
Let $V=\mathbb C^3$, and let $V_{a,b}$ denote the
irreducible $SL(3,\mathbb C)$-representation with highest weight
$a\omega_1+b\omega_2$. Then there is a natural isomorphism
of vector spaces
$$
\mathcal W_n
\simeq
\bigoplus_{r_1,\ldots,r_n\geq0}
\left(
V_{r_1+3,r_1}
\otimes\cdots\otimes
V_{r_n+3,r_n}
\right)^{SL(3,\mathbb C)}.
$$
\end{theorem}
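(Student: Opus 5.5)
The plan is to decompose $\mathbb C[\mathcal M_n]$ factorwise as an $SL(3,\mathbb C)\times\mathbb T_n$-module, read off the torus-weight $(-3,\ldots,-3)$ part, and then take $SL(3,\mathbb C)$-invariants.

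\textbf{Step 1: tensor decomposition.} As in the proof of Theorem~\ref{pol},
$$
\mathbb C[\mathcal M_n]\simeq\bigotimes_{i=1}^n B^{(i)},\qquad
B^{(i)}=\mathbb C[X_i,Y_i,Z_i,P_i,Q_i,R_i]/(P_iX_i+Q_iY_i+R_iZ_i),
$$
and this isomorphism is compatible with both group actions. Indeed, $SL(3,\mathbb C)$ acts diagonally, while the $i$-th factor $t_i$ of $\mathbb T_n$ acts only on $B^{(i)}$. Hence the $(t_1\cdots t_n)^{-3}$-isotypic part of $\mathbb C[\mathcal M_n]$ equals $\bigotimes_i B^{(i)}_{(-3)}$, where $B^{(i)}_{(-3)}$ is the $t_i^{-3}$-eigenspace of $B^{(i)}$. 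Since $SL(3,\mathbb C)$ commutes with the torus, $\mathcal W_n$ is the space of $SL(3,\mathbb C)$-invariants of this tensor product.

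\textbf{Step 2: decompose one factor.} The ring $B=\mathbb C[V\oplus V^*]/(\ell A)$ is bigraded by degree $d$ in $A$ and degree $e$ in $\ell$. The torus acts on the $(d,e)$ component by $t^{d-e}$, so weight $-3$ means $e=d+3$. By the classical harmonic decomposition,
$$
S^d(V^*)\otimes S^e(V)\simeq\bigoplus_{k\ge 0} \langle\ell A\rangle^k\cdot\mathcal H_{d-k,e-k},
$$
where the harmonic part $\mathcal H_{d,e}$ is $V_{d,e}$ or $V_{e,d}$ depending on convention. Multiplication by $\ell A$ is injective on the polynomial ring, and the Hilbert series of $B$ is $(1-st)/((1-s)^3(1-t)^3)$. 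Therefore the $(d,e)$ component of $B$ is $S^d\otimes S^e$ modulo $\ell A\cdot(S^{d-1}\otimes S^{e-1})$, which is isomorphic to the single irreducible $V_{d,e}$, namely the top Cartan component. I will check the dimension match $\dim S^d\dim S^e-\dim S^{d-1}\dim S^{e-1}=\dim V_{d,e}=(d+1)(e+1)(d+e+2)/2$ to confirm this.

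Setting $d=r$ and $e=r+3$ gives $B_{(-3)}\simeq\bigoplus_{r\ge0}V_{r+3,r}$, after fixing the labeling convention for $\omega_1,\omega_2$. The dual convention gives $V_{r,r+3}$; this only relabels the same dual pair of modules, and the paper's notation fixes which one is meant. I will check the convention with the smallest case $r=0$: the component is $S^3$ in $\ell$, which is $V_{3,0}$ under the convention in which the coordinate functions $P,Q,R$ span $V_{1,0}$.

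\textbf{Step 3: assemble.} Distributing the tensor product over the direct sums gives
$$
\bigotimes_i B^{(i)}_{(-3)}\simeq\bigoplus_{r_1,\ldots,r_n\ge0}V_{r_1+3,r_1}\otimes\cdots\otimes V_{r_n+3,r_n}.
$$
Taking invariants commutes with direct sums, and this yields the stated isomorphism. It is natural because each summand is the multihomogeneous component of $A$-degree $(r_1,\ldots,r_n)$.

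\textbf{Main obstacle.} The key step is Step 2: proving that $B^{(i)}$ contains each $V_{d,e}$ exactly once in bidegree $(d,e)$. This is the standard fact that the quotient of $S^d V^*\otimes S^eV$ by the image of contraction-multiplication is the Cartan component. I would prove it by the dimension count above, together with the observation that the Cartan component is not in the image of multiplication by $\ell A$, since its highest weight vector $P^eX^d$, appropriately paired, is not divisible by $\ell A$.
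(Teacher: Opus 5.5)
Your proposal is correct and follows essentially the paper's route: you factor $\mathbb C[\mathcal M_n]\simeq B^{\otimes n}$, identify each bihomogeneous piece $P_{r,s}/qP_{r-1,s-1}$ of $B$ with the single irreducible $V_{s,r}$, impose $s_i=r_i+3$ from the torus weight, and take invariants summand by summand; the paper gets the single irreducible from the multiplicity-free decomposition of $S^r(V^*)\otimes S^s(V)$, while you get it from the harmonic decomposition plus a dimension count. One small slip: an extremal weight vector of the Cartan component is $Z^dP^e$, not $P^eX^d$ (whose weight is not extremal when $d,e>0$), but this does not matter, since $\operatorname{ch}P_{d,e}-\operatorname{ch}P_{d-1,e-1}=\operatorname{ch}V_{e,d}$, combined with injectivity of multiplication by $\ell A$, already gives $B_{d,e}\simeq V_{e,d}$.
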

\begin{proof}
Set
$$
\mathfrak X
=
\left\{
(A,\ell)\in\mathbb C^3\oplus(\mathbb C^3)^*
\;\middle|\;
\ell A=0
\right\},
\qquad
B=\mathbb C[\mathfrak X].
$$
Since the conditions
$$
\ell_iA_i=0,\qquad i=1,\ldots,n,
$$
do not relate the pairs $(A_i,\ell_i)$ with different indices,
we have
$
\mathcal{M}_n=\mathfrak X^n
$
and, accordingly,
$$
\mathbb C[\mathcal{M}_n]\simeq B^{\otimes n}.
$$

The ring $B$ is bigraded:
$$
B=\bigoplus_{r,s\geq0}B_{r,s},
$$
where $r$ is the degree in the coordinates of $A$, while $s$ is
the degree in the coordinates of $\ell$.
We show that
$$
B_{r,s}\simeq V_{s,r}
$$
as $SL(3,\mathbb C)$-modules.

Let $V=\mathbb C^3$ and
$
P=\mathbb C[A,\ell]
$
be the polynomial ring on $V\oplus V^*$, bigraded by
the degrees in the coordinates of $A$ and $\ell$. Then its
bihomogeneous component of bidegree $(r,s)$ has the form
$$
P_{r,s}
\simeq
S^r(V^*)\otimes S^s(V).
$$
For this tensor product we have the decomposition
$$
S^r(V^*)\otimes S^s(V)
\simeq
\bigoplus_{j=0}^{\min(r,s)}
V_{s-j,r-j},
$$
see~\cite[formula~13.5]{FH}.

Set
$
q=\ell A.
$
Since
$$
\mathfrak X=\{(A,\ell)\mid \ell A=0\},
$$
its coordinate ring is
$
B=P/(q).
$
The polynomial $q$ has bidegree $(1,1)$, and therefore the
bihomogeneous component of bidegree $(r,s)$ of the ideal $(q)$
has the form
$$
(q)_{r,s}=qP_{r-1,s-1}.
$$
Hence,
$$
B_{r,s}
=
P_{r,s}/qP_{r-1,s-1}.
$$

On the other hand,
$$
P_{r-1,s-1}
\simeq
\bigoplus_{j=0}^{\min(r,s)-1}
V_{s-1-j,r-1-j}.
$$
After replacing the index $j$ by $j-1$, this decomposition can
be rewritten as
$$
P_{r-1,s-1}
\simeq
\bigoplus_{j=1}^{\min(r,s)}
V_{s-j,r-j}.
$$

Multiplication by $q$ defines a map
$$
P_{r-1,s-1}
\longrightarrow
P_{r,s},
\qquad
f\longmapsto qf.
$$
It is injective because $P$ is an integral domain and $q\neq0$.
Moreover, $q=\ell A$ is $SL(3,\mathbb C)$-invariant, and hence
this map is $SL(3,\mathbb C)$-equivariant.

The decomposition of $P_{r,s}$ is multiplicity-free, and all
irreducible components occurring in $P_{r-1,s-1}$ are precisely
the components
$$
V_{s-j,r-j},
\qquad
j=1,\ldots,\min(r,s),
$$
in the decomposition of $P_{r,s}$. Therefore,
$$
qP_{r-1,s-1}
\simeq
\bigoplus_{j=1}^{\min(r,s)}
V_{s-j,r-j}.
$$
After passing to the quotient space
$$
B_{r,s}=P_{r,s}/qP_{r-1,s-1},
$$
all these components vanish, leaving only the component
corresponding to $j=0$. Hence,
$$
B_{r,s}\simeq V_{s,r}.
$$

Therefore,
$$
\mathbb C[\mathcal{M}_n]
\simeq
\bigoplus_{\substack{r_i,s_i\geq0\\ i=1,\ldots,n}}
B_{r_1,s_1}\otimes\cdots\otimes B_{r_n,s_n}.
$$

Consider now the torus action~\eqref{tor} of
$
\mathbb T_n.
$
On the $i$-th factor it has the form
$$
A_i\longmapsto t_iA_i,
\qquad
\ell_i\longmapsto t_i^{-1}\ell_i.
$$
Hence, the component $B_{r_i,s_i}$ has weight
$
r_i-s_i
$
with respect to $t_i$, while the tensor product
$$
B_{r_1,s_1}\otimes\cdots\otimes B_{r_n,s_n}
$$
has torus weight
$$
(r_1-s_1,\ldots,r_n-s_n).
$$

By definition, $\mathcal W_n$ consists of
$SL(3,\mathbb C)$-invariant polynomials of torus weight
$
(-3,\ldots,-3).
$
Therefore, for each $i$ we must have
$
r_i-s_i=-3,
$
that is,
$
s_i=r_i+3.
$
Hence the weight subspace of weight $(-3,\ldots,-3)$ is
$$
\bigoplus_{r_1,\ldots,r_n\geq0}
B_{r_1,r_1+3}\otimes\cdots\otimes
B_{r_n,r_n+3}.
$$

Taking $SL(3,\mathbb C)$-invariants in each summand, we obtain
$$
\mathcal W_n
\simeq
\bigoplus_{r_1,\ldots,r_n\geq0}
\left(
B_{r_1,r_1+3}\otimes\cdots\otimes
B_{r_n,r_n+3}
\right)^{SL(3,\mathbb C)}.
$$
Finally, from the isomorphism
$
B_{r,s}\simeq V_{s,r}
$
it follows that
$
B_{r_i,r_i+3}
\simeq
V_{r_i+3,r_i}.
$
Thus,
$$
\mathcal W_n
\simeq
\bigoplus_{r_1,\ldots,r_n\geq0}
\left(
V_{r_1+3,r_1}
\otimes\cdots\otimes
V_{r_n+3,r_n}
\right)^{SL(3,\mathbb C)}.
$$
\end{proof}

\subsection{Graded components of the space $\mathcal W_n$}{\rm
The preceding theorem gives a representation-theoretic decomposition
of the entire space $\mathcal W_n$. By selecting its homogeneous
component of fixed $C$-degree and passing to characters, we obtain
a formula for its dimension.

\begin{theorem}
\label{dimension-Wnk}
For every $n\geq3$ and $k\geq0$, we have
$$
\mathcal W_n^{(k)}
\simeq
\bigoplus_{\substack{
r_1+\cdots+r_n=k\\
r_i\geq0}}
\left(
V_{r_1+3,r_1}
\otimes\cdots\otimes
V_{r_n+3,r_n}
\right)^{SL(3,\mathbb C)},
$$
and
\begin{equation}
\label{Wnk-schur}
{
\dim_{\mathbb C}\mathcal W_n^{(k)}
=
\sum_{\substack{
r_1+\cdots+r_n=k\\
r_i\geq0}}
\left[
s_{(k+n,k+n,k+n)}
\right]
\prod_{i=1}^n
s_{(2r_i+3,r_i)},
}
\end{equation}
where
$
[s_\lambda]\,f
$
denotes the coefficient of the Schur polynomial $s_\lambda$
in the expansion of the symmetric polynomial $f$ in the Schur basis.
\end{theorem}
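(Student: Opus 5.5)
First I fix what $\mathcal W_n^{(k)}$ means: the homogeneous component of $C$-degree $k$. I will show that on $\mathcal W_n$ the $C$-degree coincides with the total $A$-degree $r_1+\cdots+r_n$.

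\emph{Step 1: $C$-degree equals $A$-degree.}
Consider a spanning monomial $P_{\boldsymbol b,\boldsymbol a}$ from Theorem~\ref{Wn-spanning}. The factors $D_I$ have $A$-degree $0$, and each $C_{ij}=\ell_iA_j$ has $A$-degree exactly $1$. Hence the $A$-degree of $P_{\boldsymbol b,\boldsymbol a}$ equals its $C$-degree $\sum a_{ij}$.

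The defining relations $\ell_iA_i$ are bihomogeneous, so $\mathbb C[\mathcal M_n]$ is multigraded by the bidegrees $(r_i,s_i)$. The subspace of total $A$-degree $k$ is well defined, and so is $\mathcal W_n^{(k)}$; the $C$-degree grading agrees with it.

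\emph{Step 2: the decomposition of $\mathcal W_n^{(k)}$.}
In the decomposition proved in the preceding theorem, the summand indexed by $(r_1,\dots,r_n)$ has $A$-degree $r_1+\cdots+r_n$. Selecting the summands with $\sum r_i=k$ gives the stated isomorphism for $\mathcal W_n^{(k)}$.

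\emph{Step 3: the dimension formula.}
I will pass to $GL(3,\mathbb C)$ characters. The module $V_{a,b}$ corresponds to the Schur polynomial $s_{(a+b,b)}$ in three variables, up to a power of the determinant. Thus $V_{r+3,r}$ has character $s_{(2r+3,r)}$, a polynomial $GL_3$-representation of degree $3r+3$.

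The tensor product $\bigotimes_i V_{r_i+3,r_i}$ realized this way has degree
$$\sum_i (3r_i+3)=3k+3n.$$
Its $SL(3,\mathbb C)$-invariants are exactly the components on which $GL_3$ acts by a power of $\det$. In degree $3(k+n)$ the only such polynomial irreducible is $\det^{k+n}$, with character $s_{(k+n,k+n,k+n)}$.

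Consequently $\dim(\cdots)^{SL_3}$ equals the multiplicity $[s_{(k+n,k+n,k+n)}]\prod_i s_{(2r_i+3,r_i)}$. Summing over the compositions of $k$ gives \eqref{Wnk-schur}.

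\emph{Main obstacle.}
The delicate point is the normalization in Step 3. I must choose the polynomial $GL_3$ lift $s_{(a+b,b)}$ consistently for each factor, and then check that every irreducible polynomial constituent of degree $3(k+n)$ restricting trivially to $SL_3$ is indeed $\det^{k+n}$. For the latter, any partition with at most $3$ rows that is trivial on $SL_3$ has three equal parts, hence is $(m,m,m)$ with $3m=3(k+n)$.

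Step 1 is straightforward but must be stated carefully, since $\mathcal W_n$ is spanned by monomials whose $A$-degree is read off directly.
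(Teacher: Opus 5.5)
Your proposal is correct and follows the paper's own proof. You restrict the preceding decomposition to the summands with $r_1+\cdots+r_n=k$, then read off the multiplicity of the trivial representation as the coefficient of $s_{(k+n,k+n,k+n)}$ in a homogeneous product of degree $3(k+n)$. Phrasing this via polynomial $GL(3,\mathbb C)$-lifts rather than reduction modulo $x_1x_2x_3=1$ is cosmetic. Your Step 1, which identifies $C$-degree with $A$-degree through the spanning monomials $P_{\boldsymbol b,\boldsymbol a}$, usefully makes explicit a point the paper uses only tacitly.
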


\begin{proof}
By the preceding theorem,
$$
\mathcal W_n
\simeq
\bigoplus_{r_1,\ldots,r_n\geq0}
\left(
V_{r_1+3,r_1}
\otimes\cdots\otimes
V_{r_n+3,r_n}
\right)^{SL(3,\mathbb C)}.
$$
The parameter $r_i$ is the $A$-degree of the $i$-th factor, and
therefore the total $A$-degree of the corresponding tensor product
is
$$
r_1+\cdots+r_n.
$$
Hence,
$$
\mathcal W_n^{(k)}
\simeq
\bigoplus_{\substack{
r_1+\cdots+r_n=k\\
r_i\geq0}}
\left(
V_{r_1+3,r_1}
\otimes\cdots\otimes
V_{r_n+3,r_n}
\right)^{SL(3,\mathbb C)}.
$$

We now compute the dimension. For an irreducible
$SL(3,\mathbb C)$-module $V_{a,b}$, its character is given by
the Schur polynomial in the variables $x_1,x_2,x_3$
$
s_{(a+b,b,0)}
$
under the reduction condition
$x_1x_2x_3=1$; see~\cite{FH}.
Moreover, partitions differing by the addition of the same number
to all three parts determine the same $SL(3,\mathbb C)$-character,
since
$$
s_{(\lambda_1+m,\lambda_2+m,\lambda_3+m)}
=
(x_1x_2x_3)^m s_{(\lambda_1,\lambda_2,\lambda_3)}.
$$
Therefore, the character of the module
$
V_{r_i+3,r_i}
$
is represented by the Schur polynomial
$
s_{(2r_i+3,r_i,0)},
$
which, for brevity, will henceforth be written as
$
s_{(2r_i+3,r_i)}.
$
Thus, the character of the tensor product
$$
V_{r_1+3,r_1}\otimes\cdots\otimes V_{r_n+3,r_n}
$$
is represented by the product
$$
\prod_{i=1}^n s_{(2r_i+3,r_i)}.
$$

The trivial character is represented by the Schur polynomials
$$
s_{(m,m,m)}
=
(x_1x_2x_3)^m=1.
$$

For a fixed tuple
$
r_1+\cdots+r_n=k,
$
the product
$$
\prod_{i=1}^n s_{(2r_i+3,r_i)}
$$
is a homogeneous polynomial of degree
$$
\sum_{i=1}^n(3r_i+3)
=
3(k+n).
$$
Therefore, among the polynomials of the form $s_{(m,m,m)}$
representing the trivial $SL(3,\mathbb C)$-character, only
$
s_{(k+n,k+n,k+n)}
$
can occur in its expansion in the Schur basis.
Hence, the multiplicity of the trivial representation, and thus
the dimension of the space of $SL(3,\mathbb C)$-invariants, is
$$
\dim_{\mathbb C}
\left(
V_{r_1+3,r_1}
\otimes\cdots\otimes
V_{r_n+3,r_n}
\right)^{SL(3,\mathbb C)}
=
\left[
s_{(k+n,k+n,k+n)}
\right]
\prod_{i=1}^n
s_{(2r_i+3,r_i)}.
$$

Summing over all
$r_1+\cdots+r_n=k$, we obtain
$$
\dim_{\mathbb C}\mathcal W_n^{(k)}
=
\sum_{\substack{
r_1+\cdots+r_n=k\\
r_i\geq0}}
\left[
s_{(k+n,k+n,k+n)}
\right]
\prod_{i=1}^n
s_{(2r_i+3,r_i)}.
$$
\end{proof}

\begin{example}
{\rm
Consider the first nontrivial case $n=4$. By
Theorem~\ref{dimension-Wnk},
$$
\dim\mathcal W_4^{(k)}
=
\sum_{r_1+r_2+r_3+r_4=k}
[s_{(k+4,k+4,k+4)}]
S_{r_1}S_{r_2}S_{r_3}S_{r_4},
$$
where
$
S_r=s_{(2r+3,r)}.
$
The first Schur polynomials appearing in this formula are
$$
S_0=s_{(3)},
\qquad
S_1=s_{(5,1)},
\qquad
S_2=s_{(7,2)},
\qquad
S_3=s_{(9,3)},
\qquad
S_4=s_{(11,4)}.
$$

For $k=0$, there is only one trivial composition
$
(0,0,0,0),
$
and
$
[s_{(4,4,4)}]\,s_{(3)}^4=1.
$
Hence
$$
{
\dim\mathcal W_4^{(0)}=1.
}
$$

For $k=1$, all compositions are permutations of
$
(1,0,0,0).
$
Expanding the product $S_1S_0^3$ in the Schur basis, we obtain
$
[s_{(5,5,5)}]
S_1S_0^3
=
2.
$
Since there are four permutations of this composition, we obtain
$$
{
\dim\mathcal W_4^{(1)}
=
8.
}
$$

For $k=2$, there are two types of compositions:
$
(2,0,0,0),
(1,1,0,0).
$
The corresponding coefficients are
$$
[s_{(6,6,6)}]S_2S_0^3=0,  \quad
[s_{(6,6,6)}]S_1^2S_0^2=4.
$$
The first type has $4$ permutations and the second has $6$.
Therefore,
$$
{
\dim\mathcal W_4^{(2)}
=
4\cdot0+6\cdot4
=
24.
}
$$

For $k=3$, the compositions, up to permutations, have the form
$$
(3,0,0,0),
\qquad
(2,1,0,0),
\qquad
(1,1,1,0).
$$
We have
$$
[s_{(7,7,7)}]S_3S_0^3=0,
\qquad
[s_{(7,7,7)}]S_2S_1S_0^2=3,
\qquad
[s_{(7,7,7)}]S_1^3S_0=8.
$$
The numbers of distinct permutations are, respectively,
$4,12,4$.
Hence
$$
\dim\mathcal W_4^{(3)}
=
4\cdot0+12\cdot3+4\cdot8
=
68.
$$

For $k=4$, there are five types of compositions. The corresponding
computations are conveniently presented in the following table:
$$
\begin{array}{c|c|c}
(r_1,r_2,r_3,r_4)
&
\text{number of permutations}
&
[s_{(8,8,8)}]
S_{r_1}S_{r_2}S_{r_3}S_{r_4}
\\ \hline
(4,0,0,0) & 4  & 0\\
(3,1,0,0) & 12 & 0\\
(2,2,0,0) & 6  & 4\\
(2,1,1,0) & 12 & 9\\
(1,1,1,1) & 1  & 16
\end{array}
$$
Therefore,
$$
\dim\mathcal W_4^{(4)}
=
6\cdot4+12\cdot9+16
=
148.
$$

Thus, the beginning of the Hilbert series of $\mathcal W_4$ is
$$
\mathcal H_{\mathcal W_4}(t)
=
1+8t+24t^2+68t^3+148t^4
+312t^5+580t^6+\cdots.
$$

We note that these numbers should be distinguished from the number
of admissible pairs $(\boldsymbol b,\boldsymbol a)$. The latter
give a system of monomial generators of the vector space, but
relations exist among the corresponding monomials. Formula
\eqref{Wnk-schur} automatically takes all such relations into
account and therefore gives the actual dimension of
$\mathcal W_n^{(k)}$.
}
\begin{flushright}
$\triangle$
\end{flushright}
\end{example}

\section{$\mathcal W_n$ as a module over the algebra of absolute invariants}

In the previous section, the space $\mathcal W_n$ was studied
as a graded vector space. We now consider its algebraic structure
with respect to multiplication by absolute invariants. This approach
allows us to separate factors that already belong to the algebra
of absolute invariants from those that give genuinely new relative
invariants of weight $-1$.
Since multiplication by an absolute invariant does not change
the torus weight, the space $\mathcal W_n$ is a module over
the complex algebra of absolute invariants
$\mathcal A_{n,\mathbb C}$.

The main objective of this section is to construct a finite
generating system for this module and then investigate the question
of its minimality.

\subsection{Reduced admissible pairs and module generators}

Recall that to every admissible pair
$(\boldsymbol b,\boldsymbol a)$ there corresponds the monomial
$$
P_{\boldsymbol b,\boldsymbol a}
=
\prod_{I\in\mathcal T_n}D_I^{b_I}
\prod_{i\neq j}C_{ij}^{a_{ij}}.
$$
By Theorem~\ref{thm:cycle-generation}, the algebra of absolute
invariants is generated by the cyclic invariants
$Z_\gamma$. Therefore, if the $C$-part of the monomial
$P_{\boldsymbol b,\boldsymbol a}$ contains a factor
$Z_\gamma$, this factor can be extracted as an element of
$\mathcal A_{n,\mathbb C}$. This leads to the following
definition.

An admissible pair
$(\boldsymbol b,\boldsymbol a)$ will be called \emph{reduced} if
the monomial
$$
\prod_{i\neq j}C_{ij}^{a_{ij}}
$$
is not divisible by any cyclic invariant $Z_\gamma$.
Denote by
$
\mathfrak R_n
$
the set of all reduced admissible pairs.

\begin{theorem}
\label{reduced-pairs}
The set $\mathfrak R_n$ of reduced admissible pairs
$(\boldsymbol b,\boldsymbol a)$ is finite.
\end{theorem}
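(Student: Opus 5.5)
The plan is to bound both components of a reduced admissible pair by constants depending only on $n$. The vector $\boldsymbol b$ is already controlled: since $|\boldsymbol b|=n$ and $\boldsymbol b$ has $\binom n3$ nonnegative integer components, there are only finitely many choices for $\boldsymbol b$. It therefore suffices to show that, for each fixed $\boldsymbol b$, only finitely many matrices $\boldsymbol a$ make $(\boldsymbol b,\boldsymbol a)$ reduced and admissible. Admissibility fixes the vector of row-minus-column differences,
$$
\partial_{\boldsymbol a}(i)=3-d_{\boldsymbol b}(i),\qquad i=1,\ldots,n.
$$
Call this $\beta_i$; note $\sum_i\beta_i=3n-3|\boldsymbol b|=0$.

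Next, interpret $\boldsymbol a$ as a directed multigraph $\Gamma_{\boldsymbol a}$ on $[n]$ with $a_{ij}$ parallel edges $i\to j$. Each directed cycle $j_1\to j_2\to\cdots\to j_r\to j_1$ in $\Gamma_{\boldsymbol a}$, with $r\ge2$ and distinct vertices, corresponds exactly to a divisor $Z_\gamma$ of $\prod C_{ij}^{a_{ij}}$. So the pair is reduced if and only if $\Gamma_{\boldsymbol a}$ is acyclic as a directed graph; in particular $a_{ij}a_{ji}=0$ for all $i,j$. The key step is then a flow-decomposition bound. The underlying simple digraph of an acyclic $\Gamma_{\boldsymbol a}$ is a DAG, and there are finitely many DAGs on $[n]$. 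For an acyclic multigraph, the edge multiset decomposes into directed paths, each running from a vertex with $\beta_i>0$ (net out-degree) to one with $\beta_i<0$. The standard path-peeling argument applies: start at a vertex with positive excess and follow edges; acyclicity forces termination at a vertex with negative excess. Removing such a path lowers $\sum_i|\beta_i|$ by $2$ while preserving acyclicity. Hence the total number of edges satisfies
$$
\sum_{i\neq j}a_{ij}\le (n-1)\cdot\tfrac12\sum_i|\beta_i|,
$$
since each path has length at most $n-1$. With $|\beta_i|\le \max(3,d_{\boldsymbol b}(i))\le n+3$, every entry $a_{ij}$ is bounded by a constant depending only on $n$, so finitely many $\boldsymbol a$ remain.

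The main point to check carefully is that the peeling never gets stuck. Two things must hold. First, when all $\beta_i=0$ the remaining acyclic multigraph must be empty: a nonempty balanced multigraph contains a cycle, by the same argument as in the proof of Theorem~\ref{thm:cycle-generation}. Second, the walk from a positive-excess vertex must end at a vertex of negative excess: in a DAG the walk terminates at a sink of the remaining graph, and any sink with incoming edges has negative excess. Combining the finiteness of $\boldsymbol b$ with the uniform bound on $\boldsymbol a$ gives the finiteness of $\mathfrak R_n$.
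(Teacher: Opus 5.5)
Your proof is correct, but it takes a different route from the paper's.

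\textbf{The paper's argument.} After fixing $\boldsymbol b$, the paper forms the monomial ideal $J_{\boldsymbol b}$ generated by all $Y^{\boldsymbol a}$ with $\boldsymbol a\in\mathcal S_{\boldsymbol b}$. By Dickson's lemma it has finitely many minimal generators. Any reduced $\boldsymbol a$ lies above one of them, $\boldsymbol a^{(\nu)}$. The difference $\boldsymbol u=\boldsymbol a-\boldsymbol a^{(\nu)}$ satisfies $\partial_{\boldsymbol u}=0$, so if it were nonzero it would be a product of cycles by Theorem~\ref{thm:cycle-generation}, contradicting reducedness. Hence $\boldsymbol a=\boldsymbol a^{(\nu)}$. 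This is short and gives a clean structural fact: reduced solutions are exactly the minimal generators of $J_{\boldsymbol b}$ (the converse direction is immediate). However, it is non-constructive and gives no bound on the $C$-degree.

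\textbf{Your argument.} You use acyclicity of the support directly and peel the multigraph into exactly $\frac12\sum_i|\beta_i|$ directed paths, each of length at most $n-1$. This yields the explicit bound $k\le\frac{n-1}{2}\sum_i|3-d_{\boldsymbol b}(i)|$ on the $C$-degree of every reduced pair. It is essentially a general-$n$ version of the idea behind Lemma~\ref{n4}, which uses a topological ordering and cut sums. For $n=4$ your bound gives $k\le 9$ rather than the sharper $k\le 6$. Still, it is effective, and it is what one needs to turn finiteness into a finite enumeration, as the paper does for $n=4$.

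Both points you flagged are handled correctly. The walk from a positive-excess vertex must end at a sink that has incoming edges, hence negative excess. A nonempty balanced remainder would contain a cycle, which is impossible in a sub-multigraph of an acyclic one. Your remark that there are finitely many DAGs on $[n]$ is never used and can be dropped.
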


\begin{proof}
Since
$$
|\boldsymbol b|
=
\sum_{I\in\mathcal T_n}b_I
=
n
$$
and the set $\mathcal T_n$ is finite, there are only finitely
many possible vectors $\boldsymbol b$.
Therefore, it suffices to prove that for each fixed
$\boldsymbol b$ there are only finitely many reduced
matrices $\boldsymbol a$.

Fix $\boldsymbol b$ and set
$$
c_i=3-d_{\boldsymbol b}(i),
\qquad i=1,\ldots,n.
$$
Then the admissibility condition for $\boldsymbol a$ takes the form
$$
\partial_{\boldsymbol a}(i)=c_i,
\qquad i=1,\ldots,n.
$$
Denote the set of all its nonnegative integer solutions by
$$
\mathcal S_{\boldsymbol b}
=
\left\{
\boldsymbol a
\;\middle|\;
a_{ij}\in\mathbb Z_{\geq0},\quad
a_{ii}=0,\quad
\partial_{\boldsymbol a}(i)=c_i
\right\}.
$$

Consider the polynomial ring in independent variables
$
\mathbb C[\boldsymbol y]
=
\mathbb C[y_{ij}\mid i\neq j]
$
and associate with each matrix
$\boldsymbol a\in\mathcal S_{\boldsymbol b}$
the monomial
$$
Y^{\boldsymbol a}
=
\prod_{i\neq j}y_{ij}^{a_{ij}}.
$$
These monomials generate the monomial ideal
$$
J_{\boldsymbol b}
=
\left\langle
Y^{\boldsymbol a}
\;\middle|\;
\boldsymbol a\in\mathcal S_{\boldsymbol b}
\right\rangle.
$$

By Dickson's lemma
(see~\cite[Ch.~2, \S4]{CLO}),
this ideal has a finite system of monomial generators, which
can be chosen among the original monomials:
$$
J_{\boldsymbol b}
=
\left\langle
Y^{\boldsymbol a^{(1)}},
\ldots,
Y^{\boldsymbol a^{(q)}}
\right\rangle,
\qquad
\boldsymbol a^{(\nu)}
\in\mathcal S_{\boldsymbol b}.
$$

Now let
$\boldsymbol a\in\mathcal S_{\boldsymbol b}$
be a reduced solution. Since
$Y^{\boldsymbol a}\in J_{\boldsymbol b}$, it is divisible
by one of the monomial generators, that is, for some
$\nu$ we have
$
Y^{\boldsymbol a^{(\nu)}}\mid Y^{\boldsymbol a}.
$
Hence,
$
\boldsymbol u
=
\boldsymbol a-\boldsymbol a^{(\nu)}
$
is a nonnegative integer matrix.

Since $\boldsymbol a$ and
$\boldsymbol a^{(\nu)}$ satisfy the same system
$
\partial_{\boldsymbol a}(i)=c_i,
$
the linearity of the operator $\partial$ gives
$$
\partial_{\boldsymbol u}(i)=0,
\qquad i=1,\ldots,n.
$$
If $\boldsymbol u\neq0$, then by
Theorem~\ref{thm:cycle-generation} the monomial
$$
\prod_{i\neq j}C_{ij}^{u_{ij}}
$$
is a product of cyclic invariants and, in particular, is divisible
by some $Z_\gamma$. Then
$$
\prod_{i\neq j}C_{ij}^{a_{ij}}
=
\left(
\prod_{i\neq j}C_{ij}^{a^{(\nu)}_{ij}}
\right)
\left(
\prod_{i\neq j}C_{ij}^{u_{ij}}
\right)
$$
is also divisible by $Z_\gamma$, contradicting the reducedness
of $\boldsymbol a$.

Therefore, $\boldsymbol u=0$, and hence
$
\boldsymbol a=\boldsymbol a^{(\nu)}.
$
Thus, every reduced solution belongs to the finite set
$$
\{
\boldsymbol a^{(1)},\ldots,\boldsymbol a^{(q)}
\}.
$$
Consequently, for each fixed $\boldsymbol b$ there are only
finitely many reduced $\boldsymbol a$.
Since there are also only finitely many possible $\boldsymbol b$,
the set $\mathfrak R_n$ is finite.
\end{proof}

The following theorem shows that removing all cyclic factors
produces a finite generating system of the space $\mathcal W_n$
as an $\mathcal A_{n,\mathbb C}$-module.

\begin{theorem}
\label{thm:Wn-module}
For every $n\geq3$, the space $\mathcal W_n$ is a finitely
generated module over $\mathcal A_{n,\mathbb C}$, and
\begin{equation}
\label{eq:Wn-module}
{
\mathcal W_n
=
\sum_{(\boldsymbol b,\boldsymbol a)\in\mathfrak R_n}
\mathcal A_{n,\mathbb C}
P_{\boldsymbol b,\boldsymbol a}.
}
\end{equation}
\end{theorem}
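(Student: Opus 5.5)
The plan is to combine the spanning result Theorem~\ref{Wn-spanning} with the cycle-extraction argument from the proof of Theorem~\ref{thm:cycle-generation}, and then invoke the finiteness result Theorem~\ref{reduced-pairs}.

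First, the right-hand side of \eqref{eq:Wn-module} is contained in $\mathcal W_n$. Indeed, each $P_{\boldsymbol b,\boldsymbol a}$ with $(\boldsymbol b,\boldsymbol a)\in\mathfrak R_n$ is admissible, so it lies in $\mathcal W_n$ by Theorem~\ref{Wn-spanning}. Moreover, $\mathcal A_{n,\mathbb C}\simeq\mathbb C[\mathcal M_n]^{SL(3,\mathbb C)\times\mathbb T_n}$ by Theorem~\ref{pol}, so multiplication by an absolute invariant preserves both $SL(3,\mathbb C)$-invariance and the torus character $(t_1\cdots t_n)^{-3}$.

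For the reverse inclusion, Theorem~\ref{Wn-spanning} reduces the problem to showing that every admissible monomial $P_{\boldsymbol b,\boldsymbol a}$ lies in the right-hand side. I would argue by induction on the $C$-degree $\sum a_{ij}$.

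If $(\boldsymbol b,\boldsymbol a)$ is reduced, there is nothing to prove. Otherwise, the monomial $\prod C_{ij}^{a_{ij}}$ is divisible by some $Z_\gamma$ with $\gamma=(j_1\ldots j_r)$. Let $\boldsymbol a'$ be the matrix obtained by decreasing $a_{j_sj_{s+1}}$ by one for each $s$, with indices taken cyclically. Then $\boldsymbol a'\geq0$, and we have $P_{\boldsymbol b,\boldsymbol a}=Z_\gamma P_{\boldsymbol b,\boldsymbol a'}$.

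The key check is that $(\boldsymbol b,\boldsymbol a')$ is still admissible. The vector $\boldsymbol b$ is unchanged, so $|\boldsymbol b|=n$ still holds. The quantity $\partial_{\boldsymbol a}(i)$ is unchanged for every $i$: at each vertex $j_s$ of the cycle, one row entry and one column entry both drop by one, exactly as in the proof of Theorem~\ref{thm:cycle-generation}. Hence \eqref{eq:admissible-balance} is preserved. Since the $C$-degree strictly decreases by $r$, the induction hypothesis gives $P_{\boldsymbol b,\boldsymbol a'}\in\sum_{\mathfrak R_n}\mathcal A_{n,\mathbb C}P_{\boldsymbol b'',\boldsymbol a''}$. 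Because $Z_\gamma\in\mathcal A_{n,\mathbb C}$, the same holds for $P_{\boldsymbol b,\boldsymbol a}$. Iterating expresses each admissible monomial as a product of cyclic invariants times a reduced $P_{\boldsymbol b,\boldsymbol a''}$, and linear combinations then give all of $\mathcal W_n$.

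Finite generation follows immediately from Theorem~\ref{reduced-pairs}, since $\mathfrak R_n$ is finite. I do not expect a real obstacle here. The only point needing care is that admissibility survives the removal of a cycle, which is the balance computation just described. It is also worth noting that no relations among the monomials are required, since we are only proving a spanning statement.
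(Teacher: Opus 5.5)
Your proposal is correct and follows essentially the same route as the paper. Both arguments span $\mathcal W_n$ by admissible monomials, peel off cyclic factors $Z_\gamma\in\mathcal A_{n,\mathbb C}$ while keeping $(\boldsymbol b,\boldsymbol a')$ admissible and strictly decreasing the $C$-degree, and conclude finite generation from the finiteness of $\mathfrak R_n$. The differences are cosmetic: you check admissibility through the row/column balance $\partial_{\boldsymbol a'}=\partial_{\boldsymbol a}$, whereas the paper appeals to $Z_\gamma$ having zero torus weight. You also state explicitly the easy inclusion of the right-hand side into $\mathcal W_n$, which the paper leaves implicit.
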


\begin{proof}
By Theorem~\ref{Wn-spanning}, the space $\mathcal W_n$
is linearly spanned by the monomials
$P_{\boldsymbol b,\boldsymbol a}$ corresponding to
admissible pairs. It therefore suffices to show that every such
monomial can be reduced to a monomial corresponding to a reduced
admissible pair by extracting a factor from
$\mathcal A_{n,\mathbb C}$.

Let $(\boldsymbol b,\boldsymbol a)$ be non-reduced.
Then the $C$-part of the monomial
$P_{\boldsymbol b,\boldsymbol a}$ is divisible by some
cyclic invariant $Z_\gamma$. Hence
$$
P_{\boldsymbol b,\boldsymbol a}
=
Z_\gamma
P_{\boldsymbol b,\boldsymbol a'}, \quad
Z_\gamma\in\mathcal A_{n,\mathbb C}.
$$

Since $Z_\gamma$ has zero torus weight, the pair
$(\boldsymbol b,\boldsymbol a')$ is again admissible.
Moreover,
$$
\sum_{i,j=1}^n a'_{ij}
<
\sum_{i,j=1}^n a_{ij}.
$$
Therefore, after finitely many such steps we obtain
$$
P_{\boldsymbol b,\boldsymbol a}
=
A
P_{\boldsymbol b,\boldsymbol a_0},
\qquad
A\in\mathcal A_{n,\mathbb C},
$$
where $(\boldsymbol b,\boldsymbol a_0)$ is a reduced
admissible pair. This proves equality
\eqref{eq:Wn-module}.

The finiteness of the resulting system of module generators
follows from Theorem~\ref{reduced-pairs}.
\end{proof}

By the isomorphism of Theorem~\ref{thm:relative-homogeneous},
this result immediately yields the corresponding statement for
polynomial relative invariants.

\begin{corollary}
The module $\mathcal P_{n,-1}$ is finitely generated over
$\mathcal A_n$. More precisely,
$$
{
\mathcal P_{n,-1}
=
\sum_{(\boldsymbol b,\boldsymbol a)\in\mathfrak R_n}
\mathcal A_n
P_{\boldsymbol b,\boldsymbol a}.
}
$$
\end{corollary}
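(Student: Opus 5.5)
The plan is to transport Theorem~\ref{thm:Wn-module} back to the affine setting through the isomorphism of Theorem~\ref{thm:relative-homogeneous}, and then descend from $\mathbb C$ to $\mathbb R$.

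\textbf{Step 1: compatibility of the two isomorphisms.} The map $F\mapsto\widetilde F$ of Theorem~\ref{thm:relative-homogeneous} and the map $F\mapsto\widehat F$ of Theorem~\ref{pol} should be compatible with multiplication. For $A\in\mathcal A_n\otimes\mathbb C$ and $F\in\mathcal P_{n,-1}\otimes\mathbb C$ one has $\widetilde{AF}=\widehat A\,\widetilde F$, since both sides are obtained by the same substitution~\eqref{Fhat} followed by the factor $(Z_1\cdots Z_n)^{-3}$. Hence the isomorphism $\mathcal P_{n,-1}\otimes\mathbb C\simeq\mathcal W_n$ is an isomorphism of modules over $\mathcal A_{n,\mathbb C}\simeq\mathcal A_n\otimes\mathbb C$.

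\textbf{Step 2: pull back the generators over $\mathbb C$.} The inverse map is restriction to the affine section $A_i=(x_i,y_i,1)^T$, $\ell_i=(p_i,q_i,-p_ix_i-q_iy_i)$. Under it, $P_{\boldsymbol b,\boldsymbol a}$ goes to the same expression in $D_I$ and $C_{ij}$ written in affine coordinates. This is a polynomial with real (indeed integer) coefficients, which we continue to denote $P_{\boldsymbol b,\boldsymbol a}$. Theorem~\ref{thm:Wn-module} then gives
$$
\mathcal P_{n,-1}\otimes\mathbb C=\sum_{\mathfrak R_n}(\mathcal A_n\otimes\mathbb C)P_{\boldsymbol b,\boldsymbol a},
$$
a finite sum by Theorem~\ref{reduced-pairs}.

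\textbf{Step 3: descent to $\mathbb R$.} This is the only non-formal point. Let $F\in\mathcal P_{n,-1}$ be real. By Step 2,
$$
F=\sum_\nu A_\nu P_\nu,\qquad A_\nu\in\mathcal A_n\otimes\mathbb C.
$$
Write $A_\nu=A_\nu'+iA_\nu''$ with $A_\nu',A_\nu''\in\mathcal A_n$; this is possible because $\mathcal A_n\otimes\mathbb C=\mathcal A_n\oplus i\mathcal A_n$, the invariance condition being real-linear. The $P_\nu$ have real coefficients, so taking real parts of the identity inside $R_n\otimes\mathbb C$ gives $F=\sum A'_\nu P_\nu$ with $A'_\nu\in\mathcal A_n$. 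We must also check that each $P_\nu$ lies in $\mathcal P_{n,-1}$ itself and not merely in its complexification. It does, since $P_\nu$ is real and its relative invariance for real $g$ follows from the complex statement.

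Neither step involves real computation. The only care needed is in Step 3, to ensure that the generators are defined over $\mathbb R$ and that taking real parts commutes with the module operations.
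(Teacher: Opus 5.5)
Your proposal is correct and follows the same route as the paper. Both arguments transport Theorem~\ref{thm:Wn-module} through the isomorphism $\mathcal P_{n,-1}\otimes_{\mathbb R}\mathbb C\simeq\mathcal W_n$ and then descend to $\mathbb R$ by taking real parts, which works because the $P_{\boldsymbol b,\boldsymbol a}$ have real coefficients. You also make explicit two points the paper leaves implicit: that this isomorphism respects the $\mathcal A_{n,\mathbb C}$-module structure, and that the real generators lie in $\mathcal P_{n,-1}$ itself.
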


\begin{proof}
All polynomials $P_{\boldsymbol b,\boldsymbol a}$ have real
coefficients. Therefore, the statement follows from
Theorem~\ref{thm:Wn-module} and the isomorphism
$$
\mathcal P_{n,-1}\otimes_{\mathbb R}\mathbb C
\simeq
\mathcal W_n
$$
by taking the real parts of the coefficients.
\end{proof}

\subsection{Minimal generators and the space $\mathcal Q_n$}

The reduced admissible monomials constructed above form
a finite system of module generators, but this system is not,
in general, minimal. The reason is that the
$SL(3,\mathbb C)$-invariants $D_I$ and $C_{ij}$ themselves
satisfy algebraic relations, which induce linear dependencies
among the reduced admissible monomials.


To extract a minimal generating system, introduce the
corresponding quotient space.
We use the $C$-grading of the algebra
$\mathcal A_n$ and the module $\mathcal P_{n,-1}$.
Set
$$
\mathcal A_n^+
=
\bigoplus_{k>0}(\mathcal A_n)_k
$$
and define
$$
\mathcal Q_n
=
{\mathcal P_{n,-1}}/
     {\mathcal A_n^+\mathcal P_{n,-1}}.
$$
Then
$$
\mathcal Q_n
=
\bigoplus_{k\geq0}\mathcal Q_n^{(k)}.
$$

Factoring out
$\mathcal A_n^+\mathcal P_{n,-1}$
annihilates all relative invariants obtained by multiplying
invariants of lower degree by an absolute invariant of positive
degree. In particular, all admissible monomials whose $C$-part
contains a cyclic factor $Z_\gamma$ vanish in $\mathcal Q_n$.

\begin{theorem}
\label{Qn-generators}
For every $k\geq0$, the number
$
\dim_{\mathbb R}\mathcal Q_n^{(k)}
$
is equal to the number of generators of $C$-degree $k$
in a minimal homogeneous generating system of
$\mathcal P_{n,-1}$ over $\mathcal A_n$.
In particular,
$
\dim_{\mathbb R}\mathcal Q_n
$
equals the minimal number of homogeneous module generators of
$\mathcal P_{n,-1}$.
\end{theorem}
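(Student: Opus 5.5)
This is the graded Nakayama lemma. Throughout, the grading is by $C$-degree, which for a monomial $P_{\boldsymbol b,\boldsymbol a}$ equals $\sum a_{ij}$. This is the same as the $A$-degree, since $D_I$ has $A$-degree $0$ and $C_{ij}$ has $A$-degree $1$.

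\emph{Setting up the grading.} First I check that $\mathcal A_n$ is a nonnegatively graded algebra with $(\mathcal A_n)_0=\mathbb R$. By Theorem~\ref{thm:cycle-generation} it is generated by the $Z_\gamma$, which are homogeneous of positive degree $r$. Next, $\mathcal P_{n,-1}$ is a graded $\mathcal A_n$-module whose homogeneous components are finite-dimensional and vanish in negative degrees. Indeed, via Theorem~\ref{thm:relative-homogeneous} it is spanned by the homogeneous $P_{\boldsymbol b,\boldsymbol a}$, and finite-dimensionality follows either from Theorem~\ref{dimension-Wnk} or from the fact that admissible pairs of fixed degree are finite in number. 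The submodule $\mathcal A_n^+\mathcal P_{n,-1}$ is graded, so $\mathcal Q_n=\bigoplus_k\mathcal Q_n^{(k)}$ with $\mathcal Q_n^{(k)}=\mathcal P_{n,-1}^{(k)}/(\mathcal A_n^+\mathcal P_{n,-1})^{(k)}$.

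\emph{Generating sets give spanning sets of $\mathcal Q_n$.} Let $\{g_\alpha\}$ be any homogeneous generating system of $\mathcal P_{n,-1}$ over $\mathcal A_n$. Write $w=\sum f_\alpha g_\alpha$ and split each $f_\alpha$ as its constant term plus an element of $\mathcal A_n^+$. This shows that the images $\bar g_\alpha$ span $\mathcal Q_n$, and in particular those of degree $k$ span $\mathcal Q_n^{(k)}$. Hence every homogeneous generating system has at least $\dim\mathcal Q_n^{(k)}$ elements in degree $k$.

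\emph{Lifting a basis of $\mathcal Q_n$ gives generators.} For each $k$, choose homogeneous elements of $\mathcal P_{n,-1}^{(k)}$ whose images form a basis of $\mathcal Q_n^{(k)}$, and let $N$ be the submodule they generate. I prove $\mathcal P_{n,-1}^{(k)}\subseteq N$ by induction on $k$. Take $w$ of degree $k$. Subtracting a linear combination of the lifts in degree $k$, I may assume $w\in(\mathcal A_n^+\mathcal P_{n,-1})^{(k)}$. Then $w=\sum h_\beta u_\beta$ with $h_\beta\in\mathcal A_n^+$ homogeneous of positive degree and $u_\beta$ homogeneous of degree less than $k$. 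By the inductive hypothesis each $u_\beta\in N$, so $w\in N$. The induction starts because $\mathcal P_{n,-1}$ vanishes in negative degrees.

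\emph{Minimality.} A generating system has no redundant elements exactly when its images in each $\mathcal Q_n^{(k)}$ are linearly independent. If the images were dependent, one element would be congruent modulo $\mathcal A_n^+\mathcal P_{n,-1}$ to a combination of the others, and the lifting argument shows it could be removed. Combined with the spanning property, a minimal homogeneous system therefore has exactly $\dim\mathcal Q_n^{(k)}$ elements in degree $k$. Summing over $k$ gives the statement about $\dim\mathcal Q_n$. This total is finite because, by Theorem~\ref{thm:Wn-module} and the corollary, the finitely many reduced monomials already span $\mathcal Q_n$.

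The only step needing care is the induction in the lifting argument. It requires the grading to be bounded below with $(\mathcal A_n)_0=\mathbb R$, and both facts follow from the cyclic generators having positive degree.
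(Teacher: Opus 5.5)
Your proposal is correct and takes the same route as the paper: the paper invokes the graded Nakayama lemma for the connected graded algebra $\mathcal A_n$ (with $(\mathcal A_n)_0=\mathbb R$) acting on the finitely generated graded module $\mathcal P_{n,-1}$, and you prove that lemma by hand and verify its hypotheses. Your remark that the $C$-degree coincides with the $A$-degree is a useful addition, because it shows the $C$-grading is intrinsic rather than just a property of the spanning monomials $P_{\boldsymbol b,\boldsymbol a}$, a point the paper leaves implicit.
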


\begin{proof}
The algebra $\mathcal A_n$ is a connected graded
$\mathbb R$-algebra, that is,
$$
\mathcal A_n=\bigoplus_{k\geq0}(\mathcal A_n)_k,
\qquad
(\mathcal A_n)_0=\mathbb R,
$$
and $\mathcal P_{n,-1}$ is a finitely generated graded
$\mathcal A_n$-module. Therefore, by the graded Nakayama lemma,
a minimal homogeneous generating system of
$\mathcal P_{n,-1}$ maps to a homogeneous basis of the quotient
space
$$
\mathcal P_{n,-1}/
\mathcal A_n^+\mathcal P_{n,-1}.
$$
Comparing homogeneous components gives the statement.
\end{proof}

For subsequent computations, introduce the complexification
$$
\mathcal Q_{n,\mathbb C}
=
\mathcal Q_n\otimes_{\mathbb R}\mathbb C.
$$
Since
$$
\mathcal P_{n,-1}\otimes_{\mathbb R}\mathbb C
\simeq
\mathcal W_n,
$$
we have a natural graded isomorphism
$$
\mathcal Q_{n,\mathbb C}
\simeq
\frac{\mathcal W_n}
{\mathcal A_{n,\mathbb C}^{+}\mathcal W_n},
$$
where
$$
\mathcal A_{n,\mathbb C}^{+}
=
\bigoplus_{k>0}
(\mathcal A_{n,\mathbb C})_k.
$$
In particular,
$$
\dim_{\mathbb R}\mathcal Q_n^{(k)}
=
\dim_{\mathbb C}\mathcal Q_{n,\mathbb C}^{(k)}.
$$

\begin{example}{\rm Consider the simplest case $n=3$.
There is only one determinant $D_{123}$, and the condition
$
|\boldsymbol b|=3
$
forces the determinant part of an admissible monomial to have the form
$
D_{123}^3.
$
The admissibility conditions for the $C$-part then reduce to
$$
\partial_{\boldsymbol a}(i)=0,
\qquad i=1,2,3.
$$
By Theorem~\ref{thm:cycle-generation}, such a $C$-part
is an absolute invariant. Hence,
$$
\mathcal P_{3,-1}
=
D_{123}^3\mathcal A_3.
$$
Thus, for $n=3$ the module is free of rank $1$, while
the space $\mathcal Q_3$ is one-dimensional and concentrated
in $C$-degree $0$.
}
\begin{flushright}
$\triangle$
\end{flushright}
\end{example}


\subsection{Minimal generators for $n=4$}

The case $n=4$ is the first one in which the reduced system
of admissible monomials ceases to be minimal.

For $n=4$, introduce the signed maximal minors
\begin{equation*}
d_i
=
(-1)^{i-1}
D_{\{1,2,3,4\}\setminus\{i\}},
\qquad i=1,2,3,4.
\end{equation*}
Thus,
$$
d_1=D_{234},\qquad
d_2=-D_{134},\qquad
d_3=D_{124},\qquad
d_4=-D_{123}.
$$

Recall that $\mathcal W_4^{(k)}$ denotes the homogeneous
component of $\mathcal W_4$ of $C$-degree $k$.
By Theorem~\ref{thm:cycle-generation}, the algebra
$\mathcal A_{4,\mathbb C}$ is generated by cyclic
invariants whose smallest positive $C$-degree is $2$.
Therefore,
$$
(\mathcal A_{4,\mathbb C})_1=0,
$$
and consequently
$$
\left(
\mathcal A_{4,\mathbb C}^{+}\mathcal W_4
\right)^{(k)}
=0,
\qquad k=0,1.
$$
Hence, from the definition
$
\mathcal Q_{4,\mathbb C}
=
{\mathcal W_4}/
{\mathcal A_{4,\mathbb C}^{+}\mathcal W_4}
$
we immediately obtain
$$
\mathcal Q_{4,\mathbb C}^{(k)}
=
\mathcal W_4^{(k)},
\qquad k=0,1.
$$

By Theorem~\ref{dimension-Wnk},
$$
\dim_{\mathbb C}\mathcal Q_{4,\mathbb C}^{(0)}=1,
\qquad
\dim_{\mathbb C}\mathcal Q_{4,\mathbb C}^{(1)}=8.
$$

The degree-$0$ component is generated by the class of the polynomial
$$
d_1d_2d_3d_4
=
D_{123}D_{124}D_{134}D_{234}.
$$

In degree $1$, there are twelve reduced admissible monomials
of the form
$$
F_{ij}
=
d_i^2d_p d_q C_{ij},
\qquad
p, q \in
\{1,2,3,4\}\setminus\{i,j\}, \quad p<q.
$$

For $i\ne j$, let $p<q$ denote the two elements of the set
$$
\{1,2,3,4\}\setminus\{i,j\}
$$
and set
$$
F_{ij}=d_i^2d_pd_qC_{ij}.
$$
Thus, we obtain $12$ elements $F_{ij}$.

For each $j=1,\ldots,4$, we have the relation
$$
R_j=\sum_{i\ne j}d_iC_{ij}=0.
$$
Multiplying it by $\prod_{r\ne j}d_r$, we obtain
$$
\sum_{i\ne j}F_{ij}=0.
$$
Hence,
$$
\begin{aligned}
F_{41}&=-F_{21}-F_{31},&
F_{42}&=-F_{12}-F_{32},\\
F_{43}&=-F_{13}-F_{23},&
F_{34}&=-F_{14}-F_{24}.
\end{aligned}
$$
Therefore, among the $12$ elements it is sufficient to retain
the following $8$:
$$
F_{21},F_{31},\quad
F_{12},F_{32},\quad
F_{13},F_{23},\quad
F_{14},F_{24}.
$$

To describe the subsequent components, introduce the following
polynomials.
For $i\neq r$ and
$$
\{p,q\}
=
\{1,2,3,4\}\setminus\{i,r\},
$$
set
$$
G_{i\mid r}
=
d_i^3d_rC_{ip}C_{iq}.
$$
There are $12$ such polynomials.

For
$$
1\leq a<b\leq4,
\qquad
\{p,q\}
=
\{1,2,3,4\}\setminus\{a,b\},
\qquad p<q,
$$
set
$$
H_{ab}
=
d_a^2d_b^2C_{ap}C_{bq}.
$$
There are $6$ such polynomials.

For the same $a,b,p,q$, set
$$
K_{ab;p}
=
d_a^2d_b^2C_{ap}C_{bp}C_{pq},
$$
$$
K_{ab;q}
=
d_a^2d_b^2C_{aq}C_{bq}C_{qp}.
$$
Thus, for each pair $a<b$ we obtain two polynomials of this
type, for a total of $12$.

We will next need an upper bound for the $C$-degree of a relative
invariant.

\begin{lemma}
\label{n4}
For every reduced admissible pair
$(\boldsymbol b,\boldsymbol a)$ with $n=4$, its $C$-degree
$$
k=\sum_{i,j=1}^4 a_{ij}
$$
satisfies the inequality
$
k\leq6.
$
\end{lemma}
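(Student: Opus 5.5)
The plan is to translate reducedness into a statement about directed multigraphs and then bound the number of edges by a cut-counting argument.

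\emph{Step 1: rewrite the balance condition for $n=4$.} There are exactly four triples, each being the complement of a single index. For $i\in[4]$ write $\beta_i=b_{\{1,2,3,4\}\setminus\{i\}}$, which is the exponent of $d_i$. Then $d_{\boldsymbol b}(i)=|\boldsymbol b|-\beta_i=4-\beta_i$. Condition~\eqref{eq:admissible-balance} therefore becomes
$$
\partial_{\boldsymbol a}(i)=c_i:=\beta_i-1,\qquad i=1,\ldots,4 .
$$
Since $\beta_i\ge 0$ and $\sum\beta_i=4$, we get $-1\le c_i\le 3$ and $\sum_i c_i=0$.

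\emph{Step 2: reducedness means acyclicity.} Regard $\boldsymbol a$ as a directed multigraph $\Gamma$ on $[4]$ with $a_{ij}$ edges $i\to j$. Then $k$ is the number of edges of $\Gamma$, and $\partial_{\boldsymbol a}(i)$ is out-degree minus in-degree at $i$. The product $\prod C_{ij}^{a_{ij}}$ is divisible by some $Z_\gamma$ exactly when $\Gamma$ contains a directed cycle $\gamma$. So $(\boldsymbol b,\boldsymbol a)$ is reduced if and only if $\Gamma$ is acyclic. Consequently $\Gamma$ admits a topological ordering $v_1,v_2,v_3,v_4$ of the vertices in which every edge goes from an earlier vertex to a later one.

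\emph{Step 3: count edges across cuts.} For $s=1,2,3$, consider the cut separating $\{v_1,\dots,v_s\}$ from $\{v_{s+1},\dots,v_4\}$. No edge goes backwards, so the number of edges crossing this cut equals the net outflow of the first block:
$$
S_s:=c_{v_1}+\cdots+c_{v_s}.
$$
Every edge $v_t\to v_u$ with $t<u$ crosses at least one of these three cuts. Hence
$$
k\le S_1+S_2+S_3 .
$$
Now bound each prefix sum using Step 1 and $\sum c_i=0$:
\begin{itemize}
\item $S_1=c_{v_1}\le 3$;
\item $S_2=-(c_{v_3}+c_{v_4})\le 2$;
\item $S_3=-c_{v_4}\le 1$.
\end{itemize}
Adding these gives $k\le 6$.

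The main point needing care is Step 2. We must check that divisibility by a cyclic invariant is exactly the presence of a directed cycle, including $2$-cycles $Z_{(ij)}=C_{ij}C_{ji}$. This holds because a monomial is divisible by $Z_\gamma$ precisely when each edge of $\gamma$ occurs with positive multiplicity in $\Gamma$. The same fact gives the identification of crossing edges with $S_s$ in Step 3, since with no backward edges the net outflow of a block counts only forward edges leaving it.

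As a sanity check that the bound is attained, take $\beta=(4,0,0,0)$, so $c=(3,-1,-1,-1)$, and $a_{12}=a_{23}=a_{34}=1$, $a_{13}=a_{24}=a_{14}=1$. This gives $k=6$.
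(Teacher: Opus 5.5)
Your argument is correct and is essentially the paper's proof: the same rewriting $\partial_{\boldsymbol a}(i)=\beta_i-1$, the same topological ordering forced by reducedness, and the same bound $k\le S_1+S_2+S_3$, with your prefix-by-prefix estimates $3,2,1$ replacing the paper's equivalent estimate $3m_1+2m_2+m_3\le 12$. The only slip is in your optional sharpness check, which is not part of the proof: the matrix with $a_{12}=a_{13}=a_{14}=a_{23}=a_{24}=a_{34}=1$ has $\partial_{\boldsymbol a}=(3,1,-1,-3)$ and is not admissible (it would need $\beta_4=-2$); the bound is actually attained by $\beta=(4,0,0,0)$ with $a_{12}=3$, $a_{23}=2$, $a_{34}=1$, i.e.\ by the monomial $D_{234}^4C_{12}^3C_{23}^2C_{34}$.
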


\begin{proof}
For $n=4$, the vector $\boldsymbol b$ has four coordinates
indexed by the triples in the set
$$
\mathcal T_4=
\{\{1,2,3\},\{1,2,4\},\{1,3,4\},\{2,3,4\}\}.
$$
For convenience, set
$$
m_1=b_{234},\qquad
m_2=b_{134},\qquad
m_3=b_{124},\qquad
m_4=b_{123}.
$$

Since $|\boldsymbol b|=4$, we have
$$
m_1+m_2+m_3+m_4=4.
$$
Among the four triples in $\mathcal T_4$, exactly one does not
contain the index $i$, and its multiplicity is $m_i$.
Therefore,
$$
d_{\boldsymbol b}(i)=4-m_i.
$$
From the admissibility condition
$$
d_{\boldsymbol b}(i)+\partial_{\boldsymbol a}(i)=3
$$
we obtain
$$
\partial_{\boldsymbol a}(i)=m_i-1.
$$

The reducedness of the pair implies that the support of the matrix
$\boldsymbol a$ contains no oriented cycles.
Every finite acyclic directed graph admits a topological ordering
of its vertices, that is, a relabeling for which every edge
$i\to j$ satisfies $i<j$. Therefore, after simultaneously
relabeling all indexed quantities $a_{ij}$ and $m_i$, we may assume
that
$$
a_{ij}>0\Longrightarrow i<j.
$$

Set
$
c_i=m_i-1
$
and
$$
s_q=c_1+\cdots+c_q,
\qquad q=1,2,3.
$$
From the definition of $\partial_{\boldsymbol a}$ it follows that
$$
s_q
=
\sum_{\substack{i\leq q\\j>q}}a_{ij}\geq0.
$$
Every nonzero $a_{ij}$ occurs in at least one of the sums
$s_1,s_2,s_3$, and therefore
$$
k=\sum_{i<j}a_{ij}
\leq s_1+s_2+s_3.
$$
On the other hand,
$$
s_1+s_2+s_3
=
3c_1+2c_2+c_3
=
3m_1+2m_2+m_3-6.
$$
Since $m_i\geq0$ and
$$
m_1+m_2+m_3+m_4=4,
$$
we have
$$
3m_1+2m_2+m_3\leq12.
$$
Hence,
$$
k\leq s_1+s_2+s_3\leq6.
$$
\end{proof}

Since the proof reduces to a finite exact enumeration of admissible
monomials and the computation of ranks of integer matrices, we do
not display the cumbersome matrices themselves. Instead, we describe
below the algorithm for constructing them, which uniquely reproduces
the computation, and provide the results of exact row reduction over
$\mathbb Q$.

\begin{theorem}
For the graded quotient space
$\mathcal Q_{4,\mathbb C}$, we have
$$
\dim_{\mathbb C}\mathcal Q_{4,\mathbb C}^{(0)}=1,
\qquad
\dim_{\mathbb C}\mathcal Q_{4,\mathbb C}^{(1)}=8,
$$
$$
\dim_{\mathbb C}\mathcal Q_{4,\mathbb C}^{(2)}=18,
\qquad
\dim_{\mathbb C}\mathcal Q_{4,\mathbb C}^{(3)}=12,
$$
while
$$
\mathcal Q_{4,\mathbb C}^{(k)}=0,
\qquad k\geq4.
$$
In particular,
$$
\dim_{\mathbb C}\mathcal Q_{4,\mathbb C}
=
1+8+18+12
=
39.
$$

One minimal homogeneous generating system of
$\mathcal W_4$ over $\mathcal A_{4,\mathbb C}$
can be chosen as follows:
\begin{enumerate}
\item one element of $C$-degree $0$:
$$
D_{123}D_{124}D_{134}D_{234};
$$

\item eight elements of $C$-degree $1$:
$$
F_{21},F_{31},
\quad
F_{12},F_{32},
\quad
F_{13},F_{23},
\quad
F_{14},F_{24};
$$

\item eighteen elements of $C$-degree $2$:
$$
G_{i\mid r},
\qquad i\neq r,
$$
and
$$
H_{ab},
\qquad 1\leq a<b\leq4;
$$

\item twelve elements of $C$-degree $3$:
$$
K_{ab;c}.
$$
\end{enumerate}
\end{theorem}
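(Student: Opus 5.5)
By Theorem~\ref{Qn-generators} and the graded Nakayama lemma, the statement reduces to computing, for each $C$-degree $k$, the dimension of
$$
\mathcal Q_{4,\mathbb C}^{(k)}=\mathcal W_4^{(k)}/\bigl(\mathcal A_{4,\mathbb C}^{+}\mathcal W_4\bigr)^{(k)} ,
$$
and then exhibiting homogeneous elements whose classes form a basis in each degree.

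\textbf{Low and high degrees.} Degrees $0$ and $1$ are already settled above. There $\mathcal Q^{(k)}=\mathcal W_4^{(k)}$, of dimensions $1$ and $8$ by Theorem~\ref{dimension-Wnk}. The eight $F_{ij}$ listed span, because the twelve $F_{ij}$ together with $d_1d_2d_3d_4$ exhaust the reduced admissible monomials of degree $\le1$. Since the dimension is $8$, these eight elements are a basis.

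For $k\ge4$, I would argue that by Theorem~\ref{thm:Wn-module} every element of $\mathcal W_4^{(k)}$ lies in
$$
\sum_{(\boldsymbol b,\boldsymbol a)\in\mathfrak R_4}\mathcal A_{4,\mathbb C}P_{\boldsymbol b,\boldsymbol a}.
$$
Lemma~\ref{n4} bounds reduced degrees by $6$, so only $k=4,5,6$ need work. In those degrees I would enumerate the reduced admissible pairs using the proof of Lemma~\ref{n4}. The support is acyclic, $\partial_{\boldsymbol a}(i)=m_i-1$, and $k\le 3m_1+2m_2+m_3-6$ after topological relabelling. This forces $\boldsymbol m$ to be concentrated, for example $\boldsymbol m=(4,0,0,0)$ or $(3,1,0,0)$. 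For each resulting monomial, such as $d_1^4C_{12}C_{13}C_{14}\cdots$, I would show it lies in $\mathcal A^+\mathcal W_4$. The tools are the linear syzygies $R_j=\sum_{i\ne j}d_iC_{ij}=0$ and the Laplace identity \eqref{mt} with $\det(C_{ij})=0$. The strategy is to substitute $d_1C_{1j}=-\sum_{i\ne1,j}d_iC_{ij}$ repeatedly. This lowers the power of $d_1$ and transfers $C$-factors so that cycles $C_{ij}C_{ji}$ or $C_{ij}C_{jk}C_{ki}$ appear. The process terminates because each step moves $\boldsymbol m$ toward balance, and balanced $\boldsymbol m$ admit only small reduced degree.

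\textbf{Degrees 2 and 3.} Here the plan is a rank computation. First, list all reduced admissible monomials of degree $k$; by the enumeration above these are essentially the $G_{i\mid r}$, $H_{ab}$ and monomials of $F$-type times extra $C$'s, with the $K_{ab;c}$ in degree $3$. Next, compute $\dim\bigl(\mathcal A^+\mathcal W_4\bigr)^{(k)}$. It is spanned by the products $A_{ij}\cdot(\text{degree }k-2\text{ generators})$ and, in degree $3$, also $T_{ijk}\,d_1d_2d_3d_4$. Then
$$
\dim\mathcal Q^{(k)}=\dim\mathcal W_4^{(k)}-\dim\bigl(\mathcal A^+\mathcal W_4\bigr)^{(k)},
$$
where $\dim\mathcal W_4^{(2)}=24$ and $\dim\mathcal W_4^{(3)}=68$. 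To obtain these ranks concretely, I would specialize to the affine section, expand every monomial as a polynomial in $x_i,y_i,p_i,q_i$, and row-reduce the coefficient matrix over $\mathbb Q$. The $18$ elements $G,H$, respectively the $12$ elements $K$, together with the submodule part, must be shown to reach full rank $24$, respectively $68$. Matching the target numbers $18=24-6$ and $12$ then gives both the dimensions and the basis claim.

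\textbf{Main obstacle.} I expect the hardest step to be the vanishing for $k\ge4$ and the independence in degree $3$. Both require controlling the nonobvious relations among the $d_i$ and $C_{ij}$, which come from the second fundamental theorem; reducedness alone does not capture them. I would first try to obtain the vanishing by a Hilbert series check, showing that $\mathcal A^+\mathcal W_4$ already fills the dimensions $148,312,580$. If that is infeasible by hand, I would fall back on exact computer linear algebra, specializing randomly and verifying ranks.
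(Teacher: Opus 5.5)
Your outline is sound, but you obtain the upper bounds by a different route from the paper's. In this proof the paper never uses $\dim\mathcal W_4^{(k)}$ for $k\ge2$. It enumerates the reduced admissible monomials ($1,12,48,100,108,60,24$ in degrees $0,\dots,6$) and builds relation matrices $M_k$ from the relations among the $D_I$ and $C_{ij}$, discarding terms with a cyclic factor. The ranks of these matrices give the upper bounds $1,8,18,12,0,0,0$, so the same device also settles $k=4,5,6$; coordinate expansions are used only for the lower bounds in degrees $2,3$.

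You instead take the exact dimensions $24$ and $68$ from Theorem~\ref{dimension-Wnk} and need one full-rank check per degree. In degree $2$, the $6+18$ polynomials $A_{ij}d_1d_2d_3d_4$, $G_{i\mid r}$, $H_{ab}$ must be linearly independent. In degree $3$, the $48+8+12$ polynomials $A_{ij}F_{kl}$, $T_{ijk}d_1d_2d_3d_4$, $K_{ab;c}$ must be. Since $N_2$ and $N_3$ have spanning sets of sizes $6$ and $56$, this single check gives both spanning and independence of the candidates in $\mathcal Q^{(k)}$. So the $12$ you quote is $68-56$, and this count should be made explicit.

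Your route needs neither the second-fundamental-theorem relations nor the enumeration of reduced monomials. Because only a lower bound on rank is needed, evaluation at random rational points is a rigorous certificate. The cost is dependence on the Schur coefficients, which for $k\le3$ are correct. For $k=5,6$ you also need $\dim\mathcal W_4^{(5)}=312$ and $\dim\mathcal W_4^{(6)}=580$, which the paper only lists; you would have to compute them.

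For $k\ge4$, only your computational fallback is actually a proof. The hand reduction rests on the claim that each substitution $d_1C_{1j}=-\sum_{i\ne1,j}d_iC_{ij}$ moves $\boldsymbol m$ toward balance, and that claim is not justified. The substitution can also unbalance. For example, with $\boldsymbol m=(2,2,0,0)$, replacing $d_1C_{13}$ via $R_3$ produces the term $-d_2C_{23}$, giving $\boldsymbol m=(1,3,0,0)$; this reopens the possibility of cycling between types. Your list of concentrated types also omits $(2,2,0,0)$, which does occur in degree $4$, e.g.\ $d_1^2d_2^2C_{12}C_{23}^2C_{34}$.

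So handle $k\ge7$ by Lemma~\ref{n4}: every admissible monomial of such degree is non-reduced, hence lies in $N_k$. For $k=4,5,6$, verify that an explicit spanning set of $N_k$ has rank $148$, $312$, $580$ respectively. Build that spanning set from $(\mathcal A_{4,\mathbb C})_j$ times the spanning sets of $\mathcal W_4^{(k-j)}$ already obtained in lower degrees.
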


\begin{proof}
By Lemma~\ref{n4}, it is sufficient to consider
reduced admissible pairs whose $C$-degrees $k$ do not exceed $6$.
A direct enumeration of nonnegative integer solutions of the
admissibility conditions, followed by removal of the monomials
whose $C$-part contains a cyclic factor, gives, for
$k=0,\ldots,6$, respectively,
$$
1,\ 12,\ 48,\ 100,\ 108,\ 60,\ 24
$$
reduced admissible monomials.

For each $k$, denote by $V_k$ the vector space over $\mathbb Q$
whose formal basis is indexed by these reduced admissible monomials.

There are relations among them arising from algebraic relations
between the $SL(3,\mathbb C)$-invariants $D_I$ and $C_{ij}$.
For each $k$, construct a matrix $M_k$ whose columns are indexed
by the reduced admissible monomials of $C$-degree $k$.
Its rows are the coefficient vectors of linear relations among
these monomials obtained by multiplying relations between $D_I$
and $C_{ij}$ by monomials of the required multidegree.
After multiplication, terms containing a cyclic factor
$Z_\gamma$ are discarded, since their classes vanish in
$\mathcal Q_{4,\mathbb C}$. The resulting coefficients are
integers, and hence $M_k$ is an integer matrix.

Exact row reduction of these matrices over $\mathbb Q$ gives
$$
\begin{array}{c|rrrrrrr}
k&0&1&2&3&4&5&6\\ \hline
\dim V_k
&1&12&48&100&108&60&24\\
\operatorname{rank}M_k
&0&4&30&88&108&60&24.
\end{array}
$$
Hence
$$
\dim_{\mathbb C}\mathcal Q_{4,\mathbb C}^{(k)}
\leq
1,\ 8,\ 18,\ 12,\ 0,\ 0,\ 0
$$
for $k=0,\ldots,6$, respectively.

In degrees $0$ and $1$, the equalities
$$
\dim_{\mathbb C}\mathcal Q_{4,\mathbb C}^{(0)}=1,
\qquad
\dim_{\mathbb C}\mathcal Q_{4,\mathbb C}^{(1)}=8
$$
follow from the description of the corresponding components
already established above.

Now consider degrees $2$ and $3$.
Let
$$
N_k=
\left(
\mathcal A_{4,\mathbb C}^{+}\mathcal W_4
\right)^{(k)} \quad
\text{      and    } \quad
\mathcal Q_{4,\mathbb C}^{(k)}
=
\mathcal W_4^{(k)}/N_k.
$$

Expanding the polynomials in the original coordinates
$x_i,y_i,p_i,q_i$ and performing exact computations of the
coefficient matrices over $\mathbb Q$, we obtain that adjoining
the polynomials
$
G_{i\mid r},\qquad H_{ab}
$
to $N_2$ increases the rank by $18$, while adjoining the polynomials
$
K_{ab;c}
$
to $N_3$ increases the rank by $12$.
Hence, their classes are linearly independent in
$\mathcal Q_{4,\mathbb C}^{(2)}$ and
$\mathcal Q_{4,\mathbb C}^{(3)}$, respectively.

Together with the upper bounds obtained above, this gives
$$
\dim_{\mathbb C}\mathcal Q_{4,\mathbb C}^{(2)}=18,
\qquad
\dim_{\mathbb C}\mathcal Q_{4,\mathbb C}^{(3)}=12.
$$
For $k=4,5,6$, the upper bound is zero, while for
$k\geq7$ there are no reduced admissible monomials. Therefore,
$$
\mathcal Q_{4,\mathbb C}^{(k)}=0,
\qquad k\geq4.
$$

Thus,
$$
\dim_{\mathbb C}\mathcal Q_{4,\mathbb C}
=
1+8+18+12
=
39.
$$
By Theorem~\ref{Qn-generators}, these dimensions equal the
numbers of elements of the corresponding $C$-degrees in a minimal
homogeneous generating system.
The classes of the polynomials listed above form bases of the
corresponding components $\mathcal Q_{4,\mathbb C}^{(k)}$,
and therefore their representatives form a minimal homogeneous
generating system of $\mathcal W_4$.

Finally, from
$$
\mathcal P_{4,-1}\otimes_{\mathbb R}\mathbb C
\simeq
\mathcal W_4
$$
and the compatibility of this isomorphism with the grading, it follows
that the minimal number of homogeneous generators of
$\mathcal P_{4,-1}$ over $\mathcal A_4$ is also equal to
$39$.
\end{proof}

Thus, the cases $n=3$ and $n=4$ provide a complete explicit
description of the minimal homogeneous generators of the module
$\mathcal P_{n,-1}$, whereas for $n>4$ the results obtained above
ensure the finiteness of a generating system and provide a general
method for its reduction, but the explicit determination of a minimal
generating system for $n>4$ requires further investigation.

\section{Conclusions}

We have studied polynomial joint first-order differential projective
invariants for the diagonal action of the group $PGL(3,\mathbb R)$
on configurations of $n$ points.

First, the algebra $\mathcal A_n$ of absolute polynomial invariants
was described. It was shown to be generated by cyclic invariants
constructed from the contractions $C_{ij}=\ell_iA_j$. Passing to
the homogeneous vector--covector representation made it possible
to reduce the further analysis to the invariant theory of
$SL(3,\mathbb C)$ together with a torus action encoding the
projective weight.

For the space of polynomial relative invariants of weight $-1$,
a representation-theoretic description of its graded components
and formulas for their dimensions in terms of Schur polynomials
were obtained. It was proved that $\mathcal P_{n,-1}$ is a finitely
generated module over $\mathcal A_n$, and a finite generating
system was constructed in terms of reduced admissible monomials.

For $n=3$, the module $\mathcal P_{3,-1}$ was shown to be free
of rank $1$. For $n=4$, a minimal homogeneous generating system
consisting of $39$ generators was constructed.

Thus, the obtained results complement the rational theory of joint
projective differential invariants by its polynomial counterpart.
Further problems include the explicit description of minimal
generating systems for $n>4$, as well as the use of polynomial
relative invariants of weight $-1$ in the construction of
projectively invariant integral characteristics.

\end{document}